\newcommand{\N}{\mathbb{N}}
\newcommand{\R}{\mathbb{R}}
\newcommand{\bx}{\boldsymbol{x}}
\DeclareMathOperator{\rk}{rank}
\DeclareMathOperator{\spn}{span}
\DeclareMathOperator{\Range}{Rg}
\DeclareMathOperator{\dom}{dom}
\theoremstyle{thmstyleone}%
\newtheorem{teo}{Theorem}
\newtheorem{prop}[teo]{Proposition}%
\newtheorem{coro}[teo]{Corollary}%
\theoremstyle{thmstyletwo}%
\newtheorem{ass}{Assumptions}%
\newtheorem{exmp}{Example}%
\newtheorem{rmk}{Remark}%
\theoremstyle{thmstylethree}%
\begin{document}

\title[Article Title]{The iterated Golub-Kahan-Tikhonov method
\hfill\break {\small Dedicated to {\AA}ke Bj\"orck and 
Lars Eld\'en on the occasion of their 170th birthday.}}

\author[1]{\fnm{Davide} \sur{Bianchi}}\email{bianchid@mail.sysu.edu.cn}

\author[2]{\fnm{Marco} \sur{Donatelli}}\email{marco.donatelli@uninsubria.it}

\author[2]{\fnm{Davide} \sur{Furchì}}\email{dfurchi@uninsubria.it}

\author[3]{\fnm{Lothar} \sur{Reichel}}\email{reichel@math.kent.edu}

\affil[1]{\orgdiv{School of Mathematics (Zhuhai)}, \orgname{Sun Yat-sen University}, \orgaddress{\city{Zhuhai}, \postcode{519082}, \country{China}}}

\affil[2]{\orgdiv{Dipartimento di
		Scienza e Alta Tecnologia}, \orgname{Universit\`a dell'Insubria}, \orgaddress{\city{Como}, \postcode{22100}, \country{Italy}}}

\affil[3]{\orgdiv{Department of
		Mathematical Sciences}, \orgname{Kent State University}, \orgaddress{\city{Kent}, \postcode{OH 44242}, \country{USA}}}

\abstract{The Golub-Kahan-Tikhonov method is a popular solution technique for large linear discrete ill-posed problems. This method first applies partial Golub-Kahan bidiagonalization to reduce the size of the given problem and then uses Tikhonov regularization to compute a
meaningful approximate solution of the reduced problem. It is well known that iterated
variants of this method often yield approximate solutions of higher quality than the
standard non-iterated method. Moreover, it produces more accurate computed solutions than 
the Arnoldi method when the matrix that defines the linear discrete ill-posed problem is 
far from symmetric.

This paper starts with an ill-posed operator equation in infinite-dimensional Hilbert space, discretizes the equation, and then applies the iterated Golub-Kahan-Tikhonov method to the solution of the latter problem. An error analysis that addresses all discretization and approximation errors is provided. Additionally, a new approach for choosing the regularization parameter is described. This solution scheme 
produces more accurate approximate solutions than the standard (non-iterated) Golub-Kahan-Tikhonov method and the iterated Arnoldi-Tikhonov method.}

\keywords{Large linear discrete ill-posed problem $\cdot$ Golub-Kahan-Tikhonov}

\maketitle


\section{Introduction}
Let $T\colon\mathcal{X}\to \mathcal{Y}$ denote a bounded linear operator between separable
Hilbert spaces $\mathcal{X}$ and $\mathcal{Y}$ with norms $\|\cdot\|_{\mathcal X}$ and
$\|\cdot\|_{\mathcal Y}$, respectively, that are induced by inner products. We are 
concerned with operators $T$ that are not continuously invertible. Such operators arise, 
for instance, from Fredholm integral equations of the first kind. 

We consider the solution of linear operator equations of the form
\begin{equation}\label{eq}
  Tx=y,\quad x\in\mathcal{X},\quad y\in\Range(Y),
\end{equation}
which we assume to be solvable, and denote the unique least-squares solution of minimal 
norm by $x^\dagger$. Since $T$ is not continuously invertible, $x^\dagger$ might not
depend continuously on $y$. The computation of the solution of \eqref{eq} therefore is an
\emph{ill-posed problem}.

In many problems of the form \eqref{eq} of interest in applications, the right-hand side 
is not known. Instead, only an error-contaminated approximation $y^{\delta}\in\mathcal{Y}$ 
of $y$ is available. This situation arises, for instance, when the available right-hand 
side is determined by measurements. We will assume that $y^{\delta}$ satisfies 
\begin{equation*}
    \|y-y^{\delta}\|_{\mathcal{Y}}\leq\delta
\end{equation*}
with a known bound $\delta>0$. 

The availability of $y^\delta$ instead of $y$ suggests that we should seek to compute an 
approximate solution $\widetilde{x}$ of the equation
\begin{equation}\label{eqdelta}
    Tx=y^{\delta},\quad x\in\mathcal{X},\quad y^{\delta}\in\mathcal{Y},
\end{equation}
such that $\widetilde{x}$ is an accurate approximation of $x^\dagger$. Here we tacitly 
assume that the equation \eqref{eqdelta} is solvable. Note that equations of the form 
\eqref{eqdelta} arise in many applications including remote sensing \cite{diazdealba2019}, 
atmospheric tomography \cite{ramlau2012}, computerized tomography \cite{natterer2001}, 
adaptive optics \cite{raffetseder2016}, and image restoration \cite{bentbib2018}.

Since $T$ is not continuously invertible, the least-squares solution of minimal 
norm of \eqref{eqdelta} typically is not a useful approximation of the solution 
$x^\dagger$ of \eqref{eq}. To be able to determine an accurate approximation of 
$x^\dagger$ from \eqref{eqdelta}, the latter equation has to be \emph{regularized}, i.e., 
the operator $T$ has to be replaced with a nearby operator so that the solution of the equation so obtained is less sensitive to the error in $y^\delta$ than the solution of
\eqref{eqdelta}. 

Much of the literature on the solution of operator equations \eqref{eqdelta} focuses on 
the analysis of these problems in infinite-dimensional Hilbert spaces, but properties of 
the finite-dimensional problems that arise through discretization before solution often 
are ignored. Conversely, many contributions to the literature on solution techniques 
for large-scale discretized problems obtained from \eqref{eqdelta} do not take into 
account the effects of the discretization error and of the approximation errors that 
stems from reducing the dimension of a large-scale discretized problem to a problem of 
smaller size. Among the exceptions are the papers \cite{bianchi2023itat,ramlau2019}, which
use results by Natterer \cite{natterer1977} and Neubauer \cite{neubauer1988a} to address 
this gap within the framework of Arnoldi-Tikhonov methods. These methods apply a few steps
of the Arnoldi process to reduce the generally large matrix that is obtained when 
discretizing equation \eqref{eqdelta} to a linear system of algebraic equations with a 
matrix of small size. Applications of the Arnoldi process to the solution of large linear
systems of equations that arise from the discretization of equations of the form~\eqref{eqdelta}, without discussing the effect of the discretization, are described in,
e.g., \cite{alkilayh2023arnoldi,calvetti2000tikhonov,gazzola2015krylov,lewis2009arnoldi,gn2016,n2018}.

The main attraction of the Arnoldi process, when compared to the Golub-Kahan process to be
discussed in this paper, is that the Arnoldi process does not require access to the 
transpose of the system matrix that is to be reduced, while the Golub-Kahan process does.
This feature of the Arnoldi process makes it possible to reduce matrices for which 
matrix-vector products are easy to evaluate, but for which matrix-vector products with the
transposed matrix are not. This situation may arise when the matrix is not explicitly 
formed, such as when the matrix is implicitly defined by a multipole method; see, e.g., 
\cite{greengard1997}. Moreover, the Arnoldi process may require fewer matrix-vector 
product evaluations than the Golub-Kahan reduction methods, because the latter requires
matrix-vector product evaluations with both the matrix and its transpose in every step; 
see \cite{calvetti2000} for an illustration. However, there are operator equations
\eqref{eqdelta} for which the Arnoldi process is known to furnish poor results, such as 
when the operator $T$ models motion blur and the desired solution $x^\dagger$ is the
blur- and noise-free image associated with an available blur- and noise-contaminated 
image $y^\delta$; see \cite{donatelli2015arnoldi} for a computed example in finite 
dimensions. For some image restoration problems, it is possible to mitigate this 
difficulty by preconditioning; see \cite{buccini2023arnoldi,gnnr}. Nevertheless, the difficulty 
of solving certain operator equations by reducing the associated discretized equation to a
problem of small size by the Arnoldi process makes the Golub-Kahan process the default 
method for reducing a large linear system of algebraic equations to a system of small
dimension; see, e.g., \cite{bentbib2018,bjorck88,bjorck24,calvetti2003,gazzola2015krylov} 
for discussions and illustrations of this method.

The Golub-Kahan bidiagonalization process can be applied in Hilbert space and bounds
for the error incurred by carrying out only a few steps with this process when applied to
the solution of \eqref{eqdelta} are described in \cite{alqahtani2023error}. This analysis
does not use results by Natterer \cite{natterer1977}. The present paper applies bounds 
derived by Natterer to estimate the influence of the discretization error. We consider 
iterated Tikhonov regularization based on partial Golub-Kahan bidiagonalization instead of
standard (non-iterated) Tikhonov regularization, because the former regularization method 
typically gives approximations of the desired solution $x^\dagger$ of \eqref{eq} of higher 
quality. Illustrations of the superior quality of approximate solutions computed by 
iterated Tikhonov methods applied to discretized problems can be found in, e.g.,
\cite{bianchi2015iterated,buccini2017iterated,buccini2020gsvd,buccini2023arnoldi,donatelli2012nondecreasing,
hanke1998nonstationary}.  When only one iteration of the iterated Golub-Kahan-Tikhonov 
method is carried out, the method reduces to the classical Golub-Kahan-Tikhonov method. The first appearances of this type of methods can be found in \cite{bjorck88,os1981,ps1982}, where the Golub-Kahan process is referred to as the Lanczos process.

It is the purpose of the present paper to analyze the effect of the discretization error
and the error that stems from the dimension reduction achieved with the Golub-Kahan 
process on the computed solution determined by iterated Tikhonov regularization for the 
approximate solution of equation \eqref{eqdelta}. Our analysis parallels that in 
\cite{bianchi2023itat}, which concerns iterated Tikhonov regularization based on partial 
Arnoldi decomposition of the operator $T$. As we will see, some of the bounds derived in 
\cite{bianchi2023itat} also apply to the iterated Golub-Kahan-Tikhonov method of the 
present paper. Our analysis, which is based on work by Neubauer \cite{neubauer1988a}, 
suggests a new approach to determining the regularization parameter. In particular, this approach differs from the discrepancy principle, which is described, e.g., in \cite{engl1996} as well as below.

This paper is organized as follows. Section~\ref{sec:pre} discusses the discretization of
equation \eqref{eqdelta} and introduces the iterated Tikhonov method. 
Section~\ref{sec:iAT} recalls the iterated Arnoldi-Tikhonov method defined in 
\cite{bianchi2023itat}.  The Golub-Kahan-Tikhonov method is reviewed in Section 
\ref{sec:GK}, and Section~\ref{sec:iGKT} describes our iterated Golub-Kahan-Tikhonov 
method and provides convergence results. An alternative parameter choice method is 
discussed in Section \ref{impsel} and a few computed examples are presented in 
Section~\ref{sec:comp}. Concluding remarks can be found in Section~\ref{sec:end}. An
appendix presents some of the theoretical results.

We conclude this section with some comments on the work by Bj\"orck and Eld\'en on
the Golub-Kahan-Tikhonov method. They were among the first to discuss this solution method for linear discrete ill-posed problems; see, e.g., \cite{bjorck88,bjorck14,bjorck24,elden77,elden82,elden90,elden05}.
Their work has had significant impact on later developments of solution methods for this kind of problems. 

\section{Preliminaries}\label{sec:pre}
Let $T^\dagger$ stand for the Moore-Penrose pseudo-inverse of the operator $T$ in 
\eqref{eq} with
\begin{equation*}
T^\dagger \colon \dom(T^\dagger)\subseteq \mathcal{Y} \to \mathcal{X}, \qquad 
\mbox{where } \dom(T^\dagger)= \Range(T)\oplus \Range(T)^\perp.
\end{equation*}
For any $y\in\dom(T^\dagger)$, the element $x^\dagger\coloneqq T^\dagger y$ is the unique
least-square solution of minimal norm of equation \eqref{eq}.

Since $T$ is not continuously invertible, the operator $T^\dagger$ is unbounded. This may
make the least-squares solution $T^\dagger y$ of \eqref{eq} very sensitive to the error in
$y^\delta$. A regularization method replaces $T^\dagger$ by a member of a family 
$\{R_\alpha \colon \mathcal{Y} \to \mathcal{X}\}$ of continuous operators that depend on a
parameter $\alpha$, paired with a suitable parameter choice rule 
$\alpha=\alpha(\delta, y^\delta)>0$. The pair $(R_\alpha,\alpha)$ furnishes a point-wise 
approximation of $T^\dagger$; see \cite[Definition 3.1]{engl1996} for a rigorous
definition.

\subsection{Discretization and Tikhonov regularization}\label{ssec:disc}
When to compute an approximate least-squares solution of \eqref{eqdelta}, we first 
discretize the equation and then compute an approximate solution of the discretized
equation obtained. The discretization introduces a discretization error. To bound the 
propagated discretization error, we use results by Natterer \cite{natterer1977} following 
a similar approach as in \cite{ramlau2019}.

Consider a sequence $\mathcal{X}_1 \subset \mathcal{X}_2 \subset \ldots \subset 
\mathcal{X}_n \subset \ldots \subset \mathcal{X}$ of finite-dimensional subspaces 
$\mathcal{X}_n$ of $\mathcal{X}$ with $\dim(\mathcal{X}_n)<\infty$, whose union is dense 
in $\mathcal{X}$. Define the projectors 
$P_n\colon\mathcal{\mathcal{X}}\to\mathcal{\mathcal{X}}_n$ and 
$Q_n\colon\mathcal{Y}\to \mathcal{Y}_n \coloneqq T(\mathcal{X}_n)$, as well as the 
inclusion operator $\iota_n \colon \mathcal{X}_n \hookrightarrow \mathcal{X}$. Application
of these operators to equations \eqref{eq} and \eqref{eqdelta} gives the equations
\begin{align*}
	&Q_nT\iota_n P_n x=Q_ny,\\
    &Q_nT\iota_nP_nx=Q_ny^{\delta}.
\end{align*}
Introduce the operator $T_n \colon \mathcal{X}_n \to \mathcal{Y}_n$,
$$
T_{n}\coloneqq Q_nT\iota_n,
$$
and the finite-dimensional vectors 
\begin{equation*}
y_n \coloneqq Q_ny,\quad y^\delta_n \coloneqq Q_ny^\delta,\quad x_n \coloneqq P_nx.
\end{equation*}
It is natural to identify $T_{n}$ with a matrix in $\R^{n_1\times n_2}$ with $n_2=\dim(\mathcal{X}_n)$, $n_1=\dim(\mathcal{Y}_n)$, and $y_n$, 
$y^\delta_n$ with elements in $\R^{n_1}$, and $x_n$ with an element in $\R^{n_2}$. This gives us the linear systems of 
equations
\begin{align}
	&T_{n}x_n=y_n,\label{disceq}\\
    &T_{n}x_n=y_n^{\delta}.\label{disceqdelta}
\end{align}
Henceforth, we will consider $T_n$ a matrix that represents a discretization of 
$T$. 

Let $T^{\dagger}_{n}$ denote the Moore-Penrose pseudo-inverse of the matrix $T_{n}$. Then 
the unique least-squares solutions with respect to the Euclidean vector norm  of equations
\eqref{disceq} and \eqref{disceqdelta} are given by
\begin{equation*}
  x_n^{\dagger}\coloneqq T^{\dagger}_{n}y_n\quad\mbox{and}\quad 
  x_n^{\dagger, \delta}\coloneqq T^{\dagger}_{n}y_n^{\delta},
\end{equation*}
respectively. The fact that the operator $T$ has an unbounded inverse results in that the
matrix $T_n$ is severely ill-conditioned and may be singular. It follows that the vector
$x_n^{\dagger,\delta}$, even when it is defined, typically is a useless approximation of $x_n^{\dagger}$. We conclude that 
regularization of the discretized operator equation \eqref{disceqdelta} is required.

Moreover, note that the solution $x^{\dagger}_n\in\mathcal{X}_n$ of \eqref{disceq} might not be an 
accurate approximation of the solution $x^\dagger$ of \eqref{eq}, due to a large 
propagated discretization error. We therefore are interested in determining a bound for 
$\|x^\dagger-x^{\dagger}_n\|_{\mathcal{X}}$. This, in general, requires some additional 
assumptions. In particular, it is not sufficient for $T$ and $T_{n}$ to be close in the 
operator norm; see \cite[Example 3.19]{engl1996}. We will assume that  
\begin{equation}\tag{H1}\label{hp:x_n-convergence}
\|x^\dagger-x^{\dagger}_n\|_{\mathcal{X}}\leq f(n)\rightarrow 0\quad\mbox{as } 
n\to\infty,
\end{equation}
for a suitable function $f$. For instance, if $T$ is compact and 
$\limsup_{n\to \infty}\|T_n^{\dagger *}x^\dagger_n\|_{\mathcal{X}}<\infty$, 
where the superscript $^*$ stands for
matrix transposition, or if $T$ is 
compact and $\{\mathcal{\mathcal{X}}_n\}_n$ is a discretization resulting from the 
\emph{dual-least square projection} method, see \cite[Section 3.3]{engl1996}, then 
\begin{equation*}
f(n)=O(\|(I-P_n)T^{\ast}\|),
\end{equation*}
where $\|\cdot\|$ denotes the operator norm induced by the norms $\|\cdot\|_{\mathcal X}$
and $\|\cdot\|_{\mathcal Y}$.


We conclude this subsection by letting $\lbrace e_j\rbrace_{j=1}^{n_2}$ be a convenient basis 
for $\mathcal{X}_n$. Consider the representation
\begin{equation*}
  x_n=\sum\limits_{j=1}^{n_2}x_j^{(n)}e_j
\end{equation*}
of an element $x_n\in\mathcal{\mathcal{X}}_n$ and identify $x_n$ with the vector 
\begin{equation*}
  \bx_n=[x_1^{(n)},\ldots,x_{n_1}^{(n)}]^{\ast}\in\R^{n_2}.
\end{equation*}
If $\lbrace e_j\rbrace_{j=1}^{n_2}$ is an orthonormal basis, then we may choose the norms so
that $\|\bx_n\|_2 = \|x_n\|_{\mathcal{X}}$. Here and throughout this paper $\|\cdot\|_2$
denotes the Euclidean vector norm. However, for certain discretization methods, the basis
$\lbrace e_j\rbrace_{j=1}^{n_2}$ is not orthonormal and this equality does not hold. Following
\cite{ramlau2019}, we then assume that there are positive constants $c_{\min}$ and 
$c_{\max}$, independent of $n$, such that 
\begin{equation}\tag{H2}\label{hp:norm_equivalence}
	c_{\min}\|\bx_n\|_2\leq\|x_n\|_\mathcal{X}\leq c_{\max}\|\bx_n\|_2.
\end{equation}
Such inequalities hold in many practical situations, e.g., when using B-splines, wavelets, 
and the discrete cosine transform as basis for $\mathcal{X}$; see \cite{de1978practical,goodman2016discrete}.

Due to the ill-conditioning of the matrix $T_n$ in \eqref{disceqdelta}, the least-squares 
solution of \eqref{disceqdelta} of minimal norm $x_n^{\dagger,\delta}$ generally is not a 
useful approximation of the minimal norm solution $x_n^\dagger$ of \eqref{disceq}.
Therefore, the operator $T_n^\dagger$ has to be replaced with a nearby operator that is 
less sensitive to the error in $y^\delta$. A common replacement is furnished by Tikhonov 
regularization which, when applied to \eqref{disceqdelta}, reads 
\begin{equation*}
x_{\alpha,n}^{\delta}\coloneqq \underset{x_n \in \R^{n_2}}{\operatorname{argmin}} 
\|T_nx_n-y_{n}^{\delta}\|_2^2+\alpha\|x_{n}\|_2^2.
\end{equation*}
The regularization parameter $\alpha>0$ determines the amount of regularization. The 
Tikhonov solution $x_{\alpha,n}^{\delta}$ can be expressed as
\begin{equation}\label{Tikhonov_n:closed_form}
	x_{\alpha,n}^{\delta}= (T_n^{\ast}T_n+\alpha I_n)^{-1}T_n^{\ast}y_n^\delta,
\end{equation}
where $I_n$ denotes the identity matrix of order $n$.

A generalized version of this method is the iterated Tikhonov method, which is known to 
give a better approximation of the desired solution $x^\dagger$. The closed form formula
for the $i$-th approximation given by this method is
\begin{equation}\label{itTikhonov_n:closed_form}
x_{\alpha,n,i}^{\delta}=\sum\limits_{k=1}^i\alpha^{k-1}(T_{n}^{\ast}
T_n+\alpha I_n)^{-k}T_{n}^{\ast}y_n^{\delta}.
\end{equation}
Thus, the Tikhonov solution \eqref{Tikhonov_n:closed_form} corresponds to $i=1$.

Clearly, when $n$ is large, computing  $x_{\alpha,n,i}^{\delta}$ by formula \eqref{itTikhonov_n:closed_form}, or $x_{\alpha,n}^{\delta} = x_{\alpha,n,1}^{\delta}$ by 
formula~\eqref{Tikhonov_n:closed_form}, is impractical. This work discusses how to reduce the complexity of iterated Tikhonov regularization and achieve a fairly accurate approximation of
$x_{\alpha,n,i}^{\delta}$ by applying a few steps of Golub-Kahan bidiagonalization to the 
matrix $T_n$ with initial vector $y_n^\delta$. 


\section{The iterated Arnoldi-Tikhonov method}\label{sec:iAT}
We briefly recall the iterated Arnoldi-Tikhonov (iAT) method presented in 
\cite{bianchi2023itat}, which uses a partial Arnoldi decomposition instead of a partial 
Golub-Kahan bidiagonalization to implement the iterated Tikhonov method 
\eqref{itTikhonov_n:closed_form}, because some results for the iAT method carry over to 
the iterated Golub-Kahan Tikhonov (iGKT) method and because we will compare the 
performances of these methods. We assume in this section that $n=n_1=n_2$.

Application of $1\leq \ell\ll n$ steps of the Arnoldi process to the square matrix $T_{n}$ 
with initial vector $y_n^{\delta}$ generically gives the Arnoldi decomposition
\begin{equation*}
	T_{n}V_{n,\ell}=V_{n,\ell+1}H_{\ell+1,\ell}.
\end{equation*}
The columns of the matrix
\begin{equation*}
V_{n,\ell+1}=\left[\begin{array}{@{}c|c|c@{}}
	v_{n,\ell+1}^{(1)} &\cdots & v_{n,\ell+1}^{(\ell+1)}
\end{array}\right]=
\left[\begin{array}{@{}c|c@{}}
	V_{n,\ell} & v_{n,\ell+1}^{(\ell+1)}
\end{array}\right]\in\R^{n_1\times (\ell+1)}
\end{equation*}
form an orthonormal basis for the Krylov subspace
\begin{equation*}
\mathcal{K}_{\ell+1}(T_{n},y_n^{\delta})=
\spn\lbrace y_n^{\delta},T_{n}y_n^{\delta},\ldots,T_{n}^{\ell}y_n^{\delta}\rbrace.
\end{equation*}
Moreover, 
$H_{\ell+1,\ell}\in\R^{(\ell+1)\times\ell}$ is an upper Hessenberg matrix, i.e., all
entries below the subdiagonal vanish. We will assume the generic situation that all
subdiagonal entries of $H_{\ell+1,\ell}$ are nonvanishing. This requirement can 
easily be removed.

Define the Arnoldi approximation
\begin{equation}\label{T_n-a}
\tilde{T}_n^{(\ell)}\coloneqq V_{n,\ell+1}H_{\ell+1,\ell}V_{n,\ell}^{\ast} \in 
\R^{n\times n},
\end{equation}
of the matrix $T_n$. We will assume that
\begin{equation*}
\|T_n-\tilde{T}_n^{(\ell)}\|_2\leq \tilde{h}_{\ell} \quad \mbox{for some} 
\quad \tilde{h}_{\ell}\geq0.
\end{equation*}
Notice that when $\ell=n$, we have $\tilde{T}_n^{(n)} = T_n$.

We apply the iterated Tikhonov regularization \eqref{itTikhonov_n:closed_form} with the 
approximation \eqref{T_n-a}. Convergence results obtained in \cite{bianchi2023itat} are 
analogous to the ones in Section~\ref{sec:iGKT}. We summarize the iAT method in Algorithm 
\ref{algo:iAT}.

\begin{algorithm}
\caption{The iAT method}\label{algo:iAT}
\textbf{Input:} $\{T_n$, $y^\delta_n$, $\ell$, $i\}$\\
\textbf{Output:} $\tilde{x}_{\tilde{\alpha},n,i}^{\delta,\ell}$
\begin{algorithmic}[1]
\STATE Compute $\{V_{n,\ell+1}$, $H_{\ell+1,\ell}\}$ with the Arnoldi process 
\cite[Section 6.3]{saad2003}
\STATE Compute $\tilde{y}^\delta_{\ell+1}= V^{\ast}_{n,\ell+1}y^\delta_n$ 
\STATE Set $\tilde{\alpha}$
\STATE Compute $\tilde{z}_{\tilde{\alpha},\ell,i}^{\delta,\ell}=\sum\limits_{k=1}^i\tilde{\alpha}^{k-1}
(H_{\ell+1,\ell}^{\ast}H_{\ell+1,\ell}+\tilde{\alpha} I_{\ell})^{-k}H_{\ell+1,\ell}^{\ast}
\tilde{y}_{\ell+1}^{\delta}$
\STATE Return $\tilde{x}_{\tilde{\alpha},n,i}^{\delta,\ell}=V_{n,\ell}\tilde{z}_{\tilde{\alpha},\ell,i}^{\delta,\ell}$
\end{algorithmic}
\end{algorithm}

In order to compute the parameter $\tilde{\alpha}$ as proposed in \cite{bianchi2023itat}, 
we define the operator $\tilde{\mathcal{R}}_{\ell}$ to be the orthogonal projector from 
$\R^{n}$ into $\Range(\tilde{T}_n^{(\ell)})$. Let $\tilde{q}=\rk(H_{\ell+1,\ell})$ and introduce the singular value decomposition 
\begin{equation*}
  H_{\ell+1,\ell}=\tilde{W}_{\ell+1}\tilde{\Sigma}_{\ell+1,\ell}\tilde{S}^{\ast}_{\ell},
\end{equation*}
where the matrices $\tilde{W}_{\ell+1}\in\R^{(\ell+1)\times(\ell+1)}$ and 
$\tilde{S}_{\ell}\in\R^{\ell\times\ell}$ are orthogonal, and the diagonal entries of the 
matrix
\begin{equation*}
\tilde{\Sigma}_{\ell+1,\ell}=
{\rm diag}[\tilde{\sigma}_1,\tilde{\sigma}_2,\ldots,\tilde{\sigma}_\ell]\in 
\R^{(\ell+1)\times\ell}
\end{equation*}
are ordered according to $\tilde{\sigma}_1\geq\ldots\geq\tilde{\sigma}_{\tilde{q}}>
\tilde{\sigma}_{\tilde{q}+1}=\ldots=\tilde{\sigma}_\ell=0$. Since all subdiagonal entries
of the matrix $H_{\ell+1,\ell}$ are nonvanishing, the matrix has full rank 
$\tilde{q}=\ell$. Let
\begin{equation*}
I_{\ell,\ell+1}=\begin{bmatrix}
  I_{\ell} & 0\\
  0 & 0
\end{bmatrix}\in\R^{(\ell+1)\times(\ell+1)},
\end{equation*}
then
\begin{equation*}
\tilde{\mathcal{R}}_{\ell}=V_{n,\ell+1}\tilde{W}_{\ell+1}I_{\ell,\ell+1}\tilde{W}_{\ell+1}^{\ast}V_{n,\ell+1}^{\ast}.
\end{equation*}
Define 
\begin{equation*}
  \tilde{\hat{y}}^{\delta}_{\ell+1}\coloneqq I_{\ell,\ell+1}\tilde{W}_{\ell+1}^{\ast}\tilde{y}^{\delta}_{\ell+1}
\end{equation*}
and assume that at least one of the first $\tilde{q}$ entries of the vector $\tilde{\hat{y}}^{\delta}_{\ell+1}$ is non-vanishing. Then the equation
\begin{equation}\label{conda}
\tilde{\alpha}^{2i+1}(\tilde{\hat{y}}_{\ell+1}^\delta)^{\ast}(\tilde{\Sigma}_{\ell+1,\ell}
\tilde{\Sigma}_{\ell+1,\ell}^{\ast}+\tilde{\alpha} I_{\ell+1})^{-2i-1}\tilde{\hat{y}}_{\ell+1}^\delta=
(E\tilde{h}_{\ell}+C\delta)^2,
\end{equation}
possesses a unique solution $\tilde{\alpha}>0$ if we choose positive constants $C$ and $E$
such that
\begin{equation}\label{condECa}
 0\leq E\tilde{h}_{\ell}+C\delta\leq\|\tilde{\mathcal{R}}_{\ell}\tilde{y}_n^{\delta}\|_2=
 \|I_{\ell,\ell+1}\tilde{W}_{\ell+1}^{\ast}\tilde{y}_{\ell+1}^{\delta}\|_2.
\end{equation}

We recall the the results of \cite{bianchi2023itat}.

\begin{prop}\label{comp_iAT}
\cite[Proposition 1]{bianchi2023itat} Set $C=1$ and $E=\|x_n^{\dagger}\|_2$ in equation 
\eqref{conda}. Let equation \eqref{condECa} hold and let $\tilde{\alpha}>0$ be the 
unique solution of \eqref{conda}. Then for all $\hat{\alpha}\geq\tilde{\alpha}$, we have that 
$\|x_n^{\dagger}-\tilde{x}_{\tilde{\alpha},n,i}^{\delta,\ell}\|_2\leq\|x_n^{\dagger}-
\tilde{x}_{\hat{\alpha},n,i}^{\delta,\ell}\|_2$.
\end{prop}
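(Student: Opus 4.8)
The plan is to prove the stronger statement that the squared error $g(\alpha)^2 \coloneqq \|x_n^{\dagger}-\tilde{x}_{\alpha,n,i}^{\delta,\ell}\|_2^2$ is nondecreasing in $\alpha$ on $[\tilde\alpha,\infty)$; the asserted inequality is then immediate. First I would reduce the problem to the projected coordinates. Since $\tilde{x}_{\alpha,n,i}^{\delta,\ell}=V_{n,\ell}\tilde{z}_{\alpha,\ell,i}^{\delta,\ell}$ and the columns of $V_{n,\ell}$ are orthonormal, splitting $x_n^{\dagger}$ along $\Range(V_{n,\ell})$ and its orthogonal complement gives
\[
g(\alpha)^2=\|(I_n-V_{n,\ell}V_{n,\ell}^\ast)x_n^{\dagger}\|_2^2+\|V_{n,\ell}^\ast x_n^{\dagger}-\tilde{z}_{\alpha,\ell,i}^{\delta,\ell}\|_2^2,
\]
whose first term is independent of $\alpha$. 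Next I would diagonalize using the SVD $H_{\ell+1,\ell}=\tilde{W}_{\ell+1}\tilde{\Sigma}_{\ell+1,\ell}\tilde{S}_\ell^\ast$: setting $b\coloneqq\tilde{W}_{\ell+1}^\ast\tilde{y}_{\ell+1}^{\delta}$, $c\coloneqq\tilde{S}_\ell^\ast V_{n,\ell}^\ast x_n^{\dagger}$ and $r_j\coloneqq\alpha/(\tilde{\sigma}_j^2+\alpha)$, a direct computation of \eqref{itTikhonov_n:closed_form} shows the iterated-Tikhonov filter is $1-r_j^i$, so the $j$-th SVD coordinate of $\tilde{z}$ is $\frac{b_j}{\tilde{\sigma}_j}(1-r_j^i)$ and the second term above equals $\sum_{j=1}^\ell\big(c_j-\frac{b_j}{\tilde{\sigma}_j}(1-r_j^i)\big)^2$. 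In the same coordinates the left-hand side of \eqref{conda} is the strictly increasing function $d(\alpha)\coloneqq\sum_{j=1}^\ell b_j^2\,r_j^{2i+1}$, and $\tilde\alpha$ is the point with $d(\tilde\alpha)=(E\tilde{h}_\ell+C\delta)^2$.

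I would then differentiate the projected error. Splitting $c_j-\frac{b_j}{\tilde{\sigma}_j}(1-r_j^i)=(c_j-b_j/\tilde{\sigma}_j)+\frac{b_j}{\tilde{\sigma}_j}r_j^i$ and using the identity $(\tilde{\sigma}_j^2+\alpha)^{-1}=r_j/\alpha$, the derivative collapses to
\[
\frac{d}{d\alpha}g(\alpha)^2=\frac{2i}{\alpha^2}\Big(d(\alpha)+\sum_{j=1}^\ell (c_j-b_j/\tilde{\sigma}_j)\,b_j\tilde{\sigma}_j\,r_j^{i+1}\Big).
\]
Applying Cauchy--Schwarz to the cross term with the weight split $r_j^{i+1}=r_j^{1/2}\cdot r_j^{i+1/2}$ bounds it below by $-\sqrt{Q}\,\sqrt{d(\alpha)}$, where $Q\coloneqq\sum_{j=1}^\ell(c_j-b_j/\tilde{\sigma}_j)^2\tilde{\sigma}_j^2\,r_j$. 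Hence $\frac{d}{d\alpha}g(\alpha)^2\geq\frac{2i}{\alpha^2}\sqrt{d(\alpha)}\big(\sqrt{d(\alpha)}-\sqrt{Q}\big)$, and the entire argument reduces to establishing $Q\leq d(\alpha)$ for $\alpha\geq\tilde\alpha$.

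The crux is this last inequality. Since $r_j\in(0,1)$ I would first discard the $\alpha$-dependence, $Q\leq\sum_{j=1}^\ell(\tilde{\sigma}_j c_j-b_j)^2$, and then recognize the right-hand side via $H_{\ell+1,\ell}V_{n,\ell}^\ast=V_{n,\ell+1}^\ast\tilde{T}_n^{(\ell)}$ and $\tilde{y}_{\ell+1}^{\delta}=V_{n,\ell+1}^\ast y_n^{\delta}$ as bounded by $\|\tilde{T}_n^{(\ell)}x_n^{\dagger}-y_n^{\delta}\|_2^2$ (the $j=\ell+1$ coordinate is dropped and $V_{n,\ell+1}^\ast$ is norm-nonincreasing). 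Because the noise-free discretized problem is consistent, i.e. $T_n x_n^{\dagger}=y_n$ since $y_n=Q_n y\in\mathcal{Y}_n=\Range(T_n)$, the splitting $\tilde{T}_n^{(\ell)}x_n^{\dagger}-y_n^{\delta}=(\tilde{T}_n^{(\ell)}-T_n)x_n^{\dagger}+(y_n-y_n^{\delta})$ together with $\|\tilde{T}_n^{(\ell)}-T_n\|_2\leq\tilde{h}_\ell$, $\|x_n^{\dagger}\|_2=E$ and $\|y_n-y_n^{\delta}\|_2\leq\delta=C\delta$ gives $Q\leq(E\tilde{h}_\ell+C\delta)^2=d(\tilde\alpha)\leq d(\alpha)$, the final step by monotonicity of $d$. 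This yields $\frac{d}{d\alpha}g(\alpha)^2\geq0$ on $[\tilde\alpha,\infty)$ and hence the claim.

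The main obstacle is precisely this last step: one must choose the Cauchy--Schwarz grouping in the derivative so that the leftover factor $Q$ is controlled \emph{uniformly} in $\alpha$ (this is where $r_j\leq1$ is essential to strip the $\alpha$-growth), and then identify $\sum_j(\tilde{\sigma}_j c_j-b_j)^2$ with the residual $\|\tilde{T}_n^{(\ell)}x_n^{\dagger}-y_n^{\delta}\|_2^2$ that the choice $C=1$, $E=\|x_n^{\dagger}\|_2$ on the right-hand side of \eqref{conda} is designed to dominate. Everything else is routine bookkeeping in the SVD coordinates, and the admissibility condition \eqref{condECa} only serves to guarantee the existence of the finite $\tilde\alpha$ that is assumed in the statement.
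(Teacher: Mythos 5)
Your proof is correct and follows essentially the same route as the paper's own argument (given explicitly for the analogous Proposition~5 and, per the paper's citation, in the reference for this statement): differentiate the squared error with respect to $\alpha$, bound the cross term by Cauchy--Schwarz, and conclude nonnegativity of the derivative for $\alpha\geq\tilde\alpha$ from the monotonicity of the left-hand side of \eqref{conda} together with the residual bound $\|\tilde{T}_n^{(\ell)}x_n^{\dagger}-y_n^{\delta}\|_2\leq E\tilde{h}_{\ell}+C\delta$, which is exactly the role the choices $C=1$, $E=\|x_n^{\dagger}\|_2$ play. Your SVD coordinates $(b_j,c_j,r_j)$ are just the finite-dimensional instantiation of the spectral-family calculus the paper uses, so the two proofs differ only in notation, not in substance.
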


\begin{prop}\label{convA}
\cite[Proposition $2$]{bianchi2023itat} Set $C=1$ and $E=\|x^{\dagger}_n\|_2$ in equation \eqref{conda}. Let equation \eqref{condECa} hold and let 
$\tilde{\alpha}>0$ be the unique solution of \eqref{conda}. For some $\nu\geq 0$ and $\rho>0$, let
$x_n^{\dagger}\in\mathcal{X}_{n,\nu,\rho}$, where 
\begin{equation*}
\mathcal{X}_{n,\nu,\rho}\coloneqq\lbrace x_n\in\mathcal{X}_n \mid 
x_n=(T_n^{\ast}T_n)^{\nu}w_n,\; w_n\in \ker(T_n)^\perp \mbox{ and } 
\|w_n\|_2\leq\rho\rbrace.
\end{equation*}
Then
\begin{equation*}
\|x_n^{\dagger}-\tilde{x}_{\tilde{\alpha},n,i}^{\delta,\ell}\|_2=
\begin{cases}
 o(1) &\text{if}\quad\nu=0,\\
 o((h_{\ell}+\delta)^{\frac{2\nu i}{2\nu i+1}})+O(\gamma_{\ell}^{2\nu}\|w_n\|_2)&\text{if}
 \quad 0<\nu<1,\\
 O((h_{\ell}+\delta)^{\frac{2i}{2i+1}})+O(\gamma_{\ell}
 \|(I_n-\tilde{\mathcal{R}}_{\ell})T_nw_n\|_2) &\text{if}\quad\nu=1,
 \end{cases}
\end{equation*}
where $\gamma_{\ell}\coloneqq \|(I_n-\tilde{\mathcal{R}}_{\ell})T_n\|_2$.
\end{prop}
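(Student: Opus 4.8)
The plan is to exploit that, by orthonormality of the columns of $V_{n,\ell+1}$, the returned vector $\tilde{x}_{\tilde\alpha,n,i}^{\delta,\ell}=V_{n,\ell}\tilde z_{\tilde\alpha,\ell,i}^{\delta,\ell}$ is exactly the $i$-th iterated Tikhonov approximation associated with the operator $A:=\tilde T_n^{(\ell)}$ and data $y_n^\delta$. Writing the solution filter $g_{\alpha,i}(\lambda)=\lambda^{-1}\bigl(1-(\alpha/(\alpha+\lambda))^i\bigr)$ and the residual filter $r_{\alpha,i}(\lambda)=(\alpha/(\alpha+\lambda))^i$, I would first record the two scalar spectral estimates $\|r_{\alpha,i}(A^*A)(A^*A)^\nu\|_2\le c_{\nu,i}\,\alpha^\nu$ (valid for $0\le\nu\le i$) and $\|g_{\alpha,i}(A^*A)A^*\|_2\le \hat c_i\,\alpha^{-1/2}$, each obtained from a $\sup_{\lambda\ge0}$ of the corresponding rational function after the substitution $\lambda=\alpha t$. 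These are the only analytic facts about iterated Tikhonov that the argument needs.

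With $\tilde x:=\tilde{x}_{\tilde\alpha,n,i}^{\delta,\ell}$ the master decomposition is
\[
x_n^{\dagger}-\tilde x
= r_{\tilde\alpha,i}(A^*A)\,x_n^{\dagger}
\;+\;g_{\tilde\alpha,i}(A^*A)A^*\bigl((A-T_n)x_n^{\dagger}+(y_n-y_n^{\delta})\bigr),
\]
using $y_n=T_nx_n^{\dagger}$. The second group is the propagated error: by the filter bound, $\|(A-T_n)x_n^{\dagger}\|_2\le\tilde h_\ell\|x_n^{\dagger}\|_2$, and $\|y_n-y_n^{\delta}\|_2\le\delta$ (absorbing the constants of \eqref{hp:norm_equivalence}), it is $O\bigl((\tilde h_\ell\|x_n^{\dagger}\|_2+\delta)/\sqrt{\tilde\alpha}\bigr)$. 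The role of the parameter choice \eqref{conda} is to pin down $\tilde\alpha$: up to the SVD of $H_{\ell+1,\ell}$ its left-hand side is a residual-type quantity that is strictly increasing in $\tilde\alpha$, so equating it to $(E\tilde h_\ell+C\delta)^2$ with $E=\|x_n^{\dagger}\|_2$, $C=1$ forces $\tilde\alpha\to0$ as $\tilde h_\ell+\delta\to0$ and, after inserting the source condition, yields a bound $\sqrt{\tilde\alpha}\gtrsim(\tilde h_\ell+\delta)^{1/(2\nu i+1)}$. Substituting this makes the propagated error $O((\tilde h_\ell+\delta)^{2\nu i/(2\nu i+1)})$, the first rate in the statement.

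It remains to bound the bias $r_{\tilde\alpha,i}(A^*A)x_n^{\dagger}$, and this is where the source condition must be transported from $T_n$ to $A$. Writing $x_n^{\dagger}=(T_n^*T_n)^\nu w_n$ and splitting $(T_n^*T_n)^\nu=(A^*A)^\nu+\bigl[(T_n^*T_n)^\nu-(A^*A)^\nu\bigr]$, the matched part is $O(\tilde\alpha^\nu\|w_n\|_2)$ by the first spectral estimate, while $\|r_{\tilde\alpha,i}(A^*A)\|_2\le1$ leaves the mismatch $\bigl\|\bigl[(T_n^*T_n)^\nu-(A^*A)^\nu\bigr]w_n\bigr\|_2$. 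For $0<\nu<1$ I would invoke the operator Hölder inequality $\|X^\nu-Y^\nu\|_2\le\|X-Y\|_2^\nu$ for positive semidefinite $X,Y$; combined with the Arnoldi identity $T_nV_{n,\ell}=V_{n,\ell+1}H_{\ell+1,\ell}$, which gives the clean relation $A=T_nV_{n,\ell}V_{n,\ell}^{*}$ and lets the operator mismatch be controlled by $\gamma_\ell=\|(I_n-\tilde{\mathcal R}_\ell)T_n\|_2$, this produces the $O(\gamma_\ell^{2\nu}\|w_n\|_2)$ term. At the endpoint $\nu=1$ the Hölder step is unnecessary: from $x_n^{\dagger}=T_n^*T_nw_n$ and the identity $A^*(I_n-\tilde{\mathcal R}_\ell)=0$ (since $\tilde{\mathcal R}_\ell$ is the orthogonal projector onto $\Range(A)=\ker(A^*)^\perp$), one replaces $T_n$ by $A$ up to the exact remainder $(I_n-\tilde{\mathcal R}_\ell)T_nw_n$, giving the $O(\gamma_\ell\|(I_n-\tilde{\mathcal R}_\ell)T_nw_n\|_2)$ term.

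Finally I would assemble the pieces. For $\nu=0$ the source condition reduces to $x_n^{\dagger}\in\ker(T_n)^\perp$ with $\|w_n\|_2\le\rho$; the bias then vanishes in the limit $\tilde\alpha\to0$, $\tilde h_\ell\to0$ by the standard regularization argument, giving $o(1)$ without a rate. For $0<\nu<1$ the little-$o$ in the rate term reflects that, once $\tilde\alpha$ is fixed by \eqref{conda} rather than balanced worst-case, the constant $c_{\nu,i}$ is not attained and the bound is strictly sub-order, whereas at $\nu=1$ only $O$ survives. I expect the genuinely delicate steps to be (i) the transport of the source condition, namely the operator Hölder estimate together with the precise way $\gamma_\ell$, rather than $\tilde h_\ell$, enters through $A=T_nV_{n,\ell}V_{n,\ell}^{*}$ and the projector $\tilde{\mathcal R}_\ell$, and (ii) extracting the sharp scaling $\sqrt{\tilde\alpha}\gtrsim(\tilde h_\ell+\delta)^{1/(2\nu i+1)}$ from the nonstandard discrepancy equation \eqref{conda} with its exponent $2i+1$, which is what generates the $i$ in the rate $\tfrac{2\nu i}{2\nu i+1}$; the remaining manipulations are routine spectral calculus.
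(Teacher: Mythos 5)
Your high-level skeleton --- the iterated-Tikhonov filter functions, the decomposition of $x_n^{\dagger}-\tilde{x}_{\tilde{\alpha},n,i}^{\delta,\ell}$ into a bias term plus a propagated-error term, and the transport of the source condition via $\|X^{\nu}-Y^{\nu}\|_2\leq\|X-Y\|_2^{\nu}$ together with the projector identity at $\nu=1$ --- is in the right spirit. Note that the paper itself gives no proof of this statement: it quotes \cite[Proposition 2]{bianchi2023itat}, whose proof is a Neubauer-type argument \cite{neubauer1988a} of the kind illustrated by the proof of Proposition~\ref{comp} in Section~\ref{impsel}. The genuine gap in your proposal is in the rate-extraction step, and as written it cannot be repaired within your scheme.

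Two concrete failures. (i) The bound $\sqrt{\tilde{\alpha}}\gtrsim(\tilde{h}_{\ell}+\delta)^{1/(2\nu i+1)}$ is asserted, not derived. With $C=1$ and $E=\|x_n^{\dagger}\|_2$, inserting the source condition into \eqref{conda} in the way you describe (write $y_n^{\delta}=A x_n^{\dagger}+(T_n-A)x_n^{\dagger}+(y_n^{\delta}-y_n)$ with $A=\tilde{T}_n^{(\ell)}$ and sup-bound the residual function) yields an inequality of the form $E\tilde{h}_{\ell}+C\delta\leq c\,\tilde{\alpha}^{\nu+1/2}\rho+E\tilde{h}_{\ell}+C\delta$, which is vacuous because the perturbation terms already saturate the right-hand side of \eqref{conda}; and even a non-vacuous variant (e.g.\ $C>1$) would produce exponents governed by $2\nu+1$, not $2\nu i+1$, so your mechanism never explains how $i$ enters the rate. (ii) The bias $O(\tilde{\alpha}^{\nu}\|w_n\|_2)$ is never converted into a rate; doing so requires $\tilde{\alpha}\lesssim(\tilde{h}_{\ell}+\delta)^{2i/(2\nu i+1)}$, and for every $i\geq 2$ this is asymptotically \emph{smaller} than the lower bound $(\tilde{h}_{\ell}+\delta)^{2/(2\nu i+1)}$ you need for the propagated error, so the two requirements of your bias--variance balance are mutually exclusive; the scheme is self-consistent only for $i=1$, where it reproduces the classical $2\nu/(2\nu+1)$ rate. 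The Neubauer-type proof the paper defers to avoids this dead end precisely by not balancing: from \eqref{conda} one extracts the residual-type bound $\|r_{\tilde{\alpha},\,i+1/2}(AA^{\ast})Ax_n^{\dagger}\|_2\leq 2(E\tilde{h}_{\ell}+C\delta)$ (the power $i+\tfrac12$ matching the exponent $2i+1$ in \eqref{conda}), the bias is then estimated by a moment (interpolation) inequality against $\|w_n\|_2\leq\rho$ without ever bounding $\tilde{\alpha}$, and the propagated error is handled by comparing the error at $\tilde{\alpha}$ with the error at a larger reference parameter through the monotonicity result, Proposition~\ref{comp_iAT} --- a result your proposal never invokes, although the paper states it immediately before Proposition~\ref{convA} precisely because it is a pillar of the proof.
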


Observe that, since for $\ell$ large enough it holds $h_{\ell}=0$, condition \eqref{condECa} with $C=1$ is generically satisfied for the Krylov subspace. A similar condition for the Golub-Kahan method will behave in the same way.

\begin{coro}\label{convtotA}
\cite[Corollary 3]{bianchi2023itat} Assume that $x_n^{\dagger}\in\mathcal{X}_{n,1,\rho}$ and let $\tilde{\alpha}>0$ be the solution of
\eqref{conda}. Then for $\ell$ such that $\tilde{h}_{\ell}\sim\delta$, we have
\begin{eqnarray*}
  \|x_n^{\dagger}-\tilde{x}^{\delta,\ell}_{\tilde{\alpha},n,i}\|_2&=&O(\delta^{\frac{2i}{2i+1}})\qquad
  \mbox{as }\delta\rightarrow 0, \\
  \|x^{\dagger}-\tilde{x}_{\tilde{\alpha},n,i}^{\delta,\ell}\|_{\mathcal{X}}&\leq& f(n)+
  O(\delta^{\frac{2i}{2i+1}})\qquad\mbox{as }\delta\rightarrow 0.
\end{eqnarray*}
\end{coro}

As discussed at the end of \cite[Appendix~A.2]{bianchi2023itat}, if an estimate of 
$\|x_n^{\dagger}\|_2$ is not available, then we may substitute $E$ by the expression 
$D\|\tilde{x}_{\tilde{\alpha},n,i}^{\delta,\ell}\|_2$ for some constant $D\geq 1$. With this choice, for 
$\tilde{\alpha}$ satisfying \eqref{condECa}, we achieve the same convergence rates.


\section{The Golub-Kahan-Tikhonov method}\label{sec:GK}
Golub-Kahan bidiagonalization is a commonly used technique to reduce a large matrix to a 
small one, while retaining some of the important characteristics of the large matrix. This 
section reviews the Golub-Kahan bidiagonalization method and shows how it can be applied 
to approximate the discretized equation \eqref{disceqdelta} and its Tikhonov regularized 
solution \eqref{Tikhonov_n:closed_form} in a low-dimensional subspace $\R^\ell$ with 
$1\leq \ell\ll\min\lbrace n_1,n_2\rbrace$. Thorough discussions of the Golub-Kahan 
decomposition can be found in Bj\"orck~\cite{bjorck24} and in Golub and Van Loan 
\cite{golub2013}. Applications to Tikhonov regularization are discussed, e.g., in
\cite{bjorck88,calvetti2003,gazzola2015krylov}.

\begin{algorithm}[H]
\caption{Golub-Kahan bidiagonalization}\label{algo:GK}
\textbf{Input:} matrix $T_n\in\R^{n_1\times n_2}$, number of steps 
$1\leq\ell\ll\min\lbrace n_1,n_2\rbrace$, initial vector $y_n^\delta\in\R^{n_2}$\\
\textbf{Output:} lower bidiagonal matrix $B_{\ell+1,\ell}\in\R^{(\ell+1)\times\ell}$, 
matrices \\ $U_{\ell+1}=[u_1,u_2,\ldots,u_{\ell+1}]\in\R^{n_1\times(\ell+1)}$ and 
$V_{\ell}=[v_1,v_2,\ldots,v_{\ell}]\in\R^{n_2\times\ell}$ with \\ orthonormal columns
\begin{algorithmic}[1]
\STATE $\beta_1=\|y_n^\delta\|_2$; $u_1=y_n^\delta/\beta_1$; $v_0=0$;
\FOR {$i=1\; \text{to}\; \ell$}
\STATE $w=T_n^* u_i-\beta_iv_{i-1}$; 
\STATE $\alpha_i=\|w\|_2$; $v_i=w/\alpha_i$; $B_{i,i}=\alpha_i$
  \STATE $w=T_n v_i - \alpha_i u_i$
  \STATE $\beta_{i+1}=\|w\|_2$; $u_{i+1}=w/\beta_{i+1}$; $B_{i+1,i}=\beta_{i+1}$
\ENDFOR
\end{algorithmic}
\end{algorithm}

Generically, Algorithm \ref{algo:GK} determines the Golub-Kahan decompositions
\begin{equation}\label{gk}
T_nV_\ell=U_{\ell+1}B_{\ell+1,\ell}\quad\mbox{and}\quad 
T_n^*U_\ell=V_\ell B_{\ell,\ell}^*,
\end{equation}
where $V_{\ell}\in\R^{n_2\times\ell}$, the matrix $U_\ell\in\R^{n_1\times\ell}$ is made up
of the first $\ell$ columns of $U_{\ell+1}\in\R^{n_1\times(\ell+1)}$ and the matrix 
$B_{\ell,\ell}\in\R^{\ell\times\ell}$ consists of the first $\ell$ rows of the lower 
bidiagonal matrix
\begin{equation*}
B_{\ell+1,\ell}=\left[\begin{array}{ccccc}
\alpha_1 & 0 & & \cdots  & 0 \\
\beta_2  & \alpha_2 & 0 & \cdots  & \vdots \\
    0    & \beta_3 & \ddots &  & \\
    \vdots  &         & \ddots & \alpha_{\ell-1} & 0\\
         &         &        & \beta_{\ell} & \alpha_\ell \\
0         & \cdots        &        &  & \beta_{\ell+1} 
\end{array}\right]\in\R^{(\ell+1)\times\ell}.
\end{equation*}
We will assume the generic situation that all nontrivial entries of $B_{\ell+1,\ell}$ are 
nonvanishing.

Moreover, we have
\begin{eqnarray*}
{\rm span}\{u_1,u_2,\ldots,u_{\ell+1}\}&=&
{\rm span}\{y^\delta_n,T_nT_n^*y^\delta_n,\ldots,(T_nT_n^*)^\ell y_n^\delta\},\\
{\rm span}\{v_1,v_2,\ldots,v_{\ell}\}&=&
{\rm span}\{T_n^*y^\delta_n,(T_n^*T_n)T_n^*y^\delta_n,\ldots,
(T_nT_n^*)^{\ell-1}T_n^*y_n^\delta\}.
\end{eqnarray*}

Define the Golub-Kahan approximation 
\begin{equation}\label{T_n-gk}
 T_n^{(\ell)}\coloneqq U_{\ell+1}B_{\ell+1,\ell}V_{\ell}^{\ast} \in \R^{n_1\times n_2}
\end{equation}
of the matrix $T_n$. We will assume that
\begin{equation*}\label{hell}
	\|T_n-T_n^{(\ell)}\|_2\leq h_{\ell} \quad \mbox{for some} \quad h_{\ell}\geq0.
\end{equation*}
Note that when $\ell=\min\lbrace n_1,n_2\rbrace$ in Algorithm \ref{algo:GK}, we have 
$T_n^{(\ell)} = T_n$.

Having the Golub-Kahan approximation \eqref{T_n-gk} makes it possible to solve the 
\emph{approximated} discretized equation
\begin{equation}\label{eq:approximated_discretized_equation}
  T_n^{(\ell)} x_n = y_n^\delta
\end{equation}
instead of the discretized equation \eqref{disceqdelta}. This is beneficial because a
solution of equation~\eqref{eq:approximated_discretized_equation} can be computed much 
faster than a solution of \eqref{disceqdelta} when the matrix $T_n$ is large.

Application of Tikhonov regularization \eqref{Tikhonov_n:closed_form} to equation 
\eqref{eq:approximated_discretized_equation} gives the solution
\begin{equation}\label{gktm}
x_{\alpha,n}^{\delta,\ell}\coloneqq (T_n^{(\ell)*}T_n^{(\ell)}+\alpha I_n)^{-1}
T_n^{(\ell)*}y_n^{\delta}.
\end{equation}
We exploit the structure of $T_n^{(\ell)}$ to reduce the computational complexity of 
computing the solution \eqref{gktm}. For any $x_\ell \in \R^\ell$, we have
\begin{align*}
V_\ell (B^{\ast}_{\ell+1,\ell}B_{\ell+1,\ell}+\alpha I_{\ell})x_\ell &=
(V_\ell B^{\ast}_{\ell+1,\ell}U^{\ast}_{\ell+1}U_{\ell+1}B_{\ell+1,\ell}
V^{\ast}_\ell+\alpha I_{n})V_\ell x_\ell\\
&= (T_{n}^{(\ell)*}T_{n}^{(\ell)}+\alpha I_n)V_{\ell}x_\ell
\end{align*}
and, therefore,
\begin{equation}\label{HV}
(T_{n}^{(\ell)*}T_{n}^{(\ell)}+\alpha I)^{-1}V_{\ell}=V_{\ell}(B^{\ast}_{\ell+1,\ell}
B_{\ell+1,\ell}+\alpha I_{\ell})^{-1}.
\end{equation}
Let $y_{\ell+1}^\delta\coloneqq U_{\ell+1}^{\ast}y_n^\delta\in\R^{\ell+1}$. Then
\begin{equation*}
z_{\alpha,\ell}^{\delta,\ell}\coloneqq (B^{\ast}_{\ell+1,\ell}B_{\ell+1,\ell}+
\alpha I_{\ell})^{-1}B^{\ast}_{\ell+1,\ell}y_{\ell+1}^\delta 
\end{equation*}
is the Tikhonov regularized solution associated with the \emph{reduced} discretized 
equation
\begin{equation*}
    B_{\ell+1,\ell}z_\ell = y_{\ell+1}^\delta.
\end{equation*}
Combining equations \eqref{T_n-gk}, \eqref{gktm}, and \eqref{HV}, we obtain
\begin{equation*}
   x_{\alpha,n}^{\delta,\ell} =  V_{\ell}z_{\alpha,\ell}^{\delta,\ell}.
\end{equation*}
This procedure of computing an 
approximate solution of \eqref{disceqdelta}, and therefore of \eqref{eqdelta}, is 
referred to as the \emph{Golub-Kahan-Tikhonov} (GKT) \emph{regularization method}. 
Algorithm \ref{algo:GKT} summarizes the computations. We will comment on the choice
of the regularization parameter $\alpha>0$ below.

\begin{algorithm}
\caption{The Golub-Kahan-Tikhonov regularization method}\label{algo:GKT}
\textbf{Input:} $T_n$, $y^\delta_n$, $\ell$\\
\textbf{Output:} $x_{\alpha,n}^{\delta,\ell}$
\begin{algorithmic}[1]
\STATE Compute the matrices $U_{\ell+1}$, $V_\ell$, and $B_{\ell+1,\ell}$ with
Algorithm \ref{algo:GK}
\STATE Compute $y^\delta_{\ell+1}= U^{\ast}_{\ell+1}y^\delta_n$ 
\STATE Set $\alpha$
\STATE Compute $z_{\alpha,\ell}^{\delta,\ell}= (B^{\ast}_{\ell+1,\ell}B_{\ell+1,\ell}+
\alpha I_{\ell})^{-1}B^{\ast}_{\ell+1,\ell}y_{\ell+1}^\delta$
\STATE Compute $x_{\alpha,n}^{\delta,\ell}=V_\ell z_{\alpha,\ell}^{\delta,\ell}$
\end{algorithmic}
\end{algorithm}

A convergence analysis for approximate solutions of \eqref{eqdelta} computed with 
Algorithm~\ref{algo:GKT} is carried out in \cite{alqahtani2023error}, where also 
convergence rates for $\|x_n^\dagger - x_{\alpha,n}^{\delta,\ell}\|_2$ are established. 
Convergence in the space $\mathcal{X}$ then is obtained by using 
\eqref{hp:norm_equivalence}.


\section{The iterated Golub-Kahan-Tikhonov method}\label{sec:iGKT}
This section describes an iterated variant of the Golub-Kahan-Tikhonov regularization 
method outlined by Algorithm \ref{algo:GKT}. We refer to this variant as the iterated 
Golub-Kahan-Tikhonov (iGKT) method and analyze its convergence properties. Our interest in
the iGKT method stems from the fact that it delivers more accurate approximations of 
$x^\dagger$ than the GKT method. A reason for this is that the standard (non-iterated) 
Tikhonov regularization method exhibits a saturation rate of $O(\delta^{2/3})$ as 
$\delta\searrow 0$, as shown in \cite[Proposition 5.3]{engl1996}. It follows that 
approximate solutions computed with the GKT method do not converge to the solution of 
\eqref{eqdelta} faster than $O(\delta^{2/3})$ as $\delta\searrow 0$. However, the iGKT 
method can surpass this saturation rate, as is shown in Corollary \ref{convtotGK} below. 
We remark that the computational effort required by the iGKT method is essentially the 
same as for the non-iterated GKT method.

\subsection{Definition of the iGKT method}
The GKT method is combined with iterated Tikhonov regularization 
\eqref{itTikhonov_n:closed_form} using the matrix $T_n^{(\ell)}$ in~\eqref{T_n-gk}. This 
yields the iterative method
\begin{equation}\label{iGKTiter}
x_{\alpha,n,i}^{\delta,\ell}=\sum\limits_{k=1}^i\alpha^{k-1}(T_{n}^{(\ell)\ast}
T_n^{(\ell)}+\alpha I_n)^{-k}T_{n}^{(\ell)\ast}y_n^{\delta}.
\end{equation}
We will denote the left-hand side by $x_{\alpha,n,i}^{\ell}$ when the vector $y_n^\delta$
is replaced with $y_n$. Similarly as in Algorithm \ref{algo:GKT}, it 
follows from the Golub-Kahan decomposition that
\begin{equation*}
  x_{\alpha,n,i}^{\delta,\ell}=V_{\ell}z_{\alpha,\ell,i}^{\delta,\ell},
\end{equation*}
where
\begin{equation*}
  z_{\alpha,\ell,i}^{\delta,\ell}\coloneqq\sum_{k=1}^i\alpha^{k-1}(B_{\ell+1,\ell}^{\ast}
  B_{\ell+1,\ell}+\alpha I_{\ell})^{-k}B_{\ell+1,\ell}^{\ast}y_{\ell+1}^{\delta}.
\end{equation*}
The iGKT method is described by Algorithm \ref{algo:iGKT}. Step 4 of the algorithm is evaluated by using an iteration analogous to \eqref{iGKTiter} and computing the Cholesky factorization of the matrix 
$B_{\ell+1,\ell}^{\ast} B_{\ell+1,\ell}+\alpha I_{\ell}$. Notice that for $i=1$, we 
recover the GKT method of Algorithm \ref{algo:GKT}. Assume that the matrix $T_n$ is large 
and that $\ell\ll\min\lbrace n_1,n_2\rbrace$. This is a situation of interest to us. Then the main computational 
effort required by Algorithm~\ref{algo:iGKT} is the evaluation of $\ell$ 
matrix-vector products with each one of the matrices $T_n$ and $T^*_n$ needed to calculate
the Golub-Kahan decomposition \eqref{gk}. Note that the computational effort required by
Algorithm \ref{algo:iGKT} is essentially independent of the number of iterations $i$ 
of the iGKT method.

\begin{algorithm}
\caption{The iGKT method}\label{algo:iGKT}
\textbf{Input:} $T_n$, $y^\delta_n$, $\ell$, $i$\\
\textbf{Output:} $x_{\alpha,n,i}^{\delta,\ell}$
\begin{algorithmic}[1]
\STATE Compute the matrices $U_{\ell+1}$, $V_\ell$, and $B_{\ell+1,\ell}$ with 
Algorithm \ref{algo:GK} 
\STATE Compute $y^\delta_{\ell+1}= U^{\ast}_{\ell+1}y^\delta_n$ 
\STATE Set $\alpha$
\STATE Compute $z_{\alpha,\ell,i}^{\delta,\ell}=\sum\limits_{k=1}^i\alpha^{k-1}
(B_{\ell+1,\ell}^{\ast}B_{\ell+1,\ell}+\alpha I_{\ell})^{-k}B_{\ell+1,\ell}^{\ast}
y_{\ell+1}^{\delta}$
\STATE Compute $x_{\alpha,n,i}^{\delta,\ell}=V_{\ell}z_{\alpha,\ell,i}^{\delta,\ell}$
\end{algorithmic}
\end{algorithm}

\subsection{Convergence results}
This section discusses some convergence results for the iGKT method described by 
Algorithm~\ref{algo:iGKT}. The results parallel those shown for the iAT method in 
\cite{bianchi2023itat}. They only differ in that, in the present paper, we define the 
Golub-Kahan approximation \eqref{T_n-gk} of the matrix $T_n$, while in 
\cite{bianchi2023itat} the analogous approximation \eqref{T_n-a} is defined by using a 
partial Arnoldi decomposition of $T_n$. In particular, the proofs presented in 
\cite{bianchi2023itat} carry over to the setting of the present paper. Therefore, when
presenting the convergence analysis, we refer to \cite{bianchi2023itat} for proofs. 
Our results are based on work by Neubauer \cite{neubauer1988a}. This connection is clear
in the Appendix of \cite{bianchi2023itat}.

We will need the orthogonal projector $\mathcal{R}_{\ell}$ from $\R^{n_1}$ into 
$\Range(T_n^{(\ell)})$. Let $q = \operatorname{rank}(B_{\ell+1,\ell})$ and introduce the 
singular value decomposition 
\begin{equation*}
B_{\ell+1,\ell}=W_{\ell+1}\Sigma_{\ell+1,\ell}S_{\ell}^{\ast},
\end{equation*} 
where the matrices $W_{\ell+1}\in\R^{(\ell+1)\times(\ell+1)}$ and 
$S_\ell\in\R^{\ell\times\ell}$ are orthogonal, and the diagonal entries of the matrix
\begin{equation*}
\Sigma_{\ell+1,\ell}={\rm diag}[\sigma_1,\sigma_2,\ldots,\sigma_\ell]\in
\R^{(\ell+1)\times\ell}
\end{equation*}
are ordered according to 
$\sigma_1\geq\ldots\geq\sigma_q>\sigma_{q+1}=\ldots=\sigma_\ell=0$. Since we assume the Golub-Kahan process does not break down, the matrix $B_{\ell+1,\ell}$ has full rank $q=\ell$. Let
\begin{equation*}
I_{\ell,\ell+1}=\begin{bmatrix}
  I_{\ell} & 0\\
  0 & 0
  \end{bmatrix}\in\R^{(\ell+1)\times(\ell+1)}.
\end{equation*}
Then
\begin{equation*}
\mathcal{R}_\ell=U_{\ell+1}W_{\ell+1}I_{\ell,\ell+1}W_{\ell+1}^*U_{\ell+1}^*.
\end{equation*}

Define 
\begin{equation*}
\hat{y}_{\ell+1}^\delta \coloneqq I_{\ell,\ell+1}W_{\ell+1}^{\ast}y_{\ell+1}^\delta
\end{equation*}
and assume that at least one of the first $q$ entries of the vector 
$\hat{y}_{\ell+1}^\delta$ is non-vanishing. Then the equation
\begin{equation}\label{condgk}
\alpha^{2i+1}(\hat{y}_{\ell+1}^\delta)^{\ast}(\Sigma_{\ell+1,\ell}
\Sigma_{\ell+1,\ell}^{\ast}+\alpha I_{\ell+1})^{-2i-1}\hat{y}_{\ell+1}^\delta=
(Eh_{\ell}+C\delta)^2,
\end{equation}
with positive constants $C$ and $E$ has a unique solution $\alpha>0$ if we choose $C$
and $E$ so that
\begin{equation}\label{condECgk}
 0\leq Eh_{\ell}+C\delta\leq\|\mathcal{R}_{\ell}y_n^{\delta}\|_2=
 \|I_{\ell,\ell+1}W_{\ell+1}^{\ast}y_{\ell+1}^{\delta}\|_2.
\end{equation}

By using the same techniques as for Propositions~\ref{comp_iAT} and \ref{convA} and Corollary~\ref{convtotA} respectively, we obtain the following results.

\begin{prop}\label{comp_iGKT}
Set $C=1$ and $E=\|x_n^{\dagger}\|_2$ in equation \eqref{condgk}. Let equation 
\eqref{condECgk} hold and let $\alpha>0$ be the unique solution of \eqref{condgk}. Then 
for all $\hat{\alpha}\geq\alpha$, we have that 
$\|x_n^{\dagger}-x_{\alpha,n,i}^{\delta,\ell}\|_2\leq\|x_n^{\dagger}-
x_{\hat{\alpha},n,i}^{\delta,\ell}\|_2$.
\end{prop}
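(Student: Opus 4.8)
The plan is to closely follow the proof of Proposition~\ref{comp_iAT}, i.e.\ \cite[Proposition~1]{bianchi2023itat}, the only change being that the Arnoldi approximation is replaced by the Golub--Kahan approximation $T_n^{(\ell)}$ of \eqref{T_n-gk} and that the SVD of the Hessenberg matrix is replaced by $B_{\ell+1,\ell}=W_{\ell+1}\Sigma_{\ell+1,\ell}S_\ell^\ast$. Every structural ingredient on which the Neubauer-type argument of \cite{bianchi2023itat} rests is available verbatim here: an approximation $T_n^{(\ell)}$ of $T_n$ with $\|T_n-T_n^{(\ell)}\|_2\le h_\ell$, the closed form \eqref{iGKTiter} of the iterated solution, the orthogonal projector $\mathcal{R}_\ell$ onto $\Range(T_n^{(\ell)})$, and the factorization through the small matrix $B_{\ell+1,\ell}$. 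Hence the strategy is to show that $\alpha\mapsto\|x_n^\dagger-x_{\alpha,n,i}^{\delta,\ell}\|_2$ is non-decreasing on $[\alpha^\ast,\infty)$, where $\alpha^\ast$ denotes the solution $\alpha$ of \eqref{condgk}.

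First I would pass to spectral coordinates. Since $x_{\alpha,n,i}^{\delta,\ell}=V_\ell z_{\alpha,\ell,i}^{\delta,\ell}\in\Range(V_\ell)$ and $V_\ell$ has orthonormal columns, the error splits orthogonally as $\|x_n^\dagger-x_{\alpha,n,i}^{\delta,\ell}\|_2^2=\|(I_{n}-V_\ell V_\ell^\ast)x_n^\dagger\|_2^2+\|V_\ell^\ast x_n^\dagger-z_{\alpha,\ell,i}^{\delta,\ell}\|_2^2$. The first summand is independent of $\alpha$, so it suffices to analyse the reduced quantity $g(\alpha)\coloneqq\|V_\ell^\ast x_n^\dagger-z_{\alpha,\ell,i}^{\delta,\ell}\|_2^2$. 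Writing this out in the SVD basis of $B_{\ell+1,\ell}$ introduces the iterated-Tikhonov residual filter $\bigl(\alpha/(\sigma_j^2+\alpha)\bigr)^{i}$ and identifies the left-hand side of \eqref{condgk} as $\psi(\alpha)=\sum_{j=1}^{\ell}\bigl(\alpha/(\sigma_j^2+\alpha)\bigr)^{2i+1}\,|(W_{\ell+1}^\ast y_{\ell+1}^\delta)_j|^2$, a strictly increasing function ranging over $[0,\|\mathcal{R}_\ell y_n^\delta\|_2^2)$; together with \eqref{condECgk} this is exactly what makes $\alpha^\ast$ well defined.

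The comparison itself I would obtain, as in \cite{bianchi2023itat}, by bounding the error below at a generic $\hat\alpha$ and above at $\alpha^\ast$ so that the two estimates meet through \eqref{condgk}. Concretely, I would split $x_n^\dagger-x_{\hat\alpha,n,i}^{\delta,\ell}$ into an \emph{essential} part governed by the filter acting on the range component and two perturbation parts arising from replacing $T_n$ by $T_n^{(\ell)}$ and $y_n$ by $y_n^\delta$. Using $\|T_n-T_n^{(\ell)}\|_2\le h_\ell$, $\|y_n-y_n^\delta\|_2\le\delta$, and the calibration $C=1$, $E=\|x_n^\dagger\|_2$, the perturbation parts are controlled by $Eh_\ell+C\delta$, while the essential part is comparable to $\psi(\hat\alpha)^{1/2}$. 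Since $\psi$ increases and equals $(Eh_\ell+C\delta)^2$ precisely at $\alpha^\ast$, for every $\hat\alpha\ge\alpha^\ast$ the essential part dominates the budget $Eh_\ell+C\delta$, which yields $\|x_n^\dagger-x_{\hat\alpha,n,i}^{\delta,\ell}\|_2\ge\|x_n^\dagger-x_{\alpha^\ast,n,i}^{\delta,\ell}\|_2$.

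The main obstacle is this middle step: $g$ is \emph{not} monotone term by term, because the cross terms between the true-solution coefficients $(S_\ell^\ast V_\ell^\ast x_n^\dagger)_j$ and the filtered data are sign-indefinite, so monotonicity cannot be read off directly from $g'(\alpha)$. The whole point of the calibration $E=\|x_n^\dagger\|_2$, $C=1$ is to dominate these cross terms by the combined error budget $Eh_\ell+C\delta$, after which the monotone functional $\psi$ forces the sign. Making this domination rigorous is where Neubauer's moment and interpolation inequalities for the fractional powers $\bigl(\alpha(T_n^{(\ell)}T_n^{(\ell)\ast}+\alpha I)^{-1}\bigr)^{i+1/2}$ enter, and this is precisely the portion of the argument imported unchanged from the Appendix of \cite{bianchi2023itat}.
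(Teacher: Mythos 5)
Your proposal is correct and is essentially the paper's own proof: the paper establishes Proposition~\ref{comp_iGKT} simply by noting that the proof of \cite[Proposition~1]{bianchi2023itat} carries over verbatim once the Arnoldi approximation \eqref{T_n-a} is replaced by the Golub--Kahan approximation \eqref{T_n-gk}, and your checklist of structural ingredients (the bound $\|T_n-T_n^{(\ell)}\|_2\le h_\ell$, the projector $\mathcal{R}_\ell$ onto $\Range(T_n^{(\ell)})$, the SVD of $B_{\ell+1,\ell}$, and the factorization through $V_\ell$) is exactly the justification the paper gives for that carry-over, with your orthogonal splitting and your identification of the left-hand side of \eqref{condgk} as a strictly increasing function with range $[0,\|\mathcal{R}_\ell y_n^\delta\|_2^2)$ both checking out. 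The one point to tighten is the phrasing of your third paragraph: the imported argument is not a matching of a lower bound for the error at $\hat{\alpha}$ against an upper bound at the solution $\alpha$ of \eqref{condgk} (two lossy bounds would not by themselves yield the sharp comparison), but a sign argument for the derivative of $\alpha\mapsto\tfrac12\|x_n^{\dagger}-x_{\alpha,n,i}^{\delta,\ell}\|_2^2$ --- as in this paper's own proof of Proposition~\ref{comp}, that derivative is bounded below by a positive factor times the difference between the square root of the left-hand side of \eqref{condgk} and the budget $Eh_\ell+C\delta$, which is nonnegative for all $\hat{\alpha}\ge\alpha$ by the monotonicity you noted --- so the ``forces the sign'' description in your closing paragraph, rather than the bound-matching one, is the accurate account of the mechanism.
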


\begin{prop}\label{convGKT}
Set $C=1$ and $E=\|x^{\dagger}_n\|_2$. Assume that \eqref{condECgk} holds and let 
$\alpha>0$ be the unique solution of \eqref{condgk}. For some $\nu\geq 0$ and $\rho>0$, let
$x_n^{\dagger}\in\mathcal{X}_{n,\nu,\rho}$, where 
\begin{equation*}
\mathcal{X}_{n,\nu,\rho}\coloneqq\lbrace x_n\in\mathcal{X}_n \mid 
x_n=(T_n^{\ast}T_n)^{\nu}w_n,\; w_n\in \ker(T_n)^\perp \mbox{ and } 
\|w_n\|_2\leq\rho\rbrace.
\end{equation*}
Then
\begin{equation*}
\|x_n^{\dagger}-x_{\alpha,n,i}^{\delta,\ell}\|_2=
\begin{cases}
 o(1) &\text{if}\quad\nu=0,\\
 o((h_{\ell}+\delta)^{\frac{2\nu i}{2\nu i+1}})+O(\gamma_{\ell}^{2\nu}\|w_n\|_2)&\text{if}
 \quad 0<\nu<1,\\
 O((h_{\ell}+\delta)^{\frac{2i}{2i+1}})+O(\gamma_{\ell}
 \|(I_n-\mathcal{R}_{\ell})T_nw_n\|_2) &\text{if}\quad\nu=1,
 \end{cases}
\end{equation*}
where $\gamma_{\ell}\coloneqq \|(I_n-\mathcal{R}_{\ell})T_n\|_2$.
\end{prop}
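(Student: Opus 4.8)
The plan is to reduce the statement to the corresponding result for the iAT method (Proposition~\ref{convA}), since the Golub--Kahan approximation \eqref{T_n-gk} shares all the structural properties of the Arnoldi approximation \eqref{T_n-a} on which the proof in \cite{bianchi2023itat} relies. First I would record the reduced singular value decomposition of $A:=T_n^{(\ell)}$: substituting $B_{\ell+1,\ell}=W_{\ell+1}\Sigma_{\ell+1,\ell}S_\ell^{\ast}$ into \eqref{T_n-gk} gives $A=(U_{\ell+1}W_{\ell+1})\Sigma_{\ell+1,\ell}(V_\ell S_\ell)^{\ast}$, where $U_{\ell+1}W_{\ell+1}$ and $V_\ell S_\ell$ have orthonormal columns. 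Hence the nonzero singular values of $A$ are exactly $\sigma_1,\dots,\sigma_\ell$, the operator $\mathcal{R}_\ell$ is the orthogonal projector onto the span of the corresponding left singular vectors, and every spectral function of $A^{\ast}A$ or $AA^{\ast}$ is evaluated through the diagonal matrix $\Sigma_{\ell+1,\ell}$, exactly as $H_{\ell+1,\ell}$ is used in \cite{bianchi2023itat}. I would also use the identity $A=T_nV_\ell V_\ell^{\ast}$, which follows by combining \eqref{gk} and \eqref{T_n-gk}, so that $h_\ell=\|T_n(I_n-V_\ell V_\ell^{\ast})\|_2$ and $A^{\ast}=A^{\ast}\mathcal{R}_\ell$. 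This is the only place where the rectangular shape $n_1\neq n_2$ enters, and it causes no difficulty.

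Next I would decompose the error. Writing $g_{\alpha,i}(\lambda)=\lambda^{-1}\bigl[1-(\alpha/(\lambda+\alpha))^i\bigr]$ and $r_{\alpha,i}(\lambda)=(\alpha/(\lambda+\alpha))^i$, the iterate \eqref{iGKTiter} is $x_{\alpha,n,i}^{\delta,\ell}=g_{\alpha,i}(A^{\ast}A)A^{\ast}y_n^\delta$. Inserting $y_n^\delta=y_n+(y_n^\delta-y_n)$ together with $y_n=T_nx_n^\dagger=Ax_n^\dagger+(T_n-A)x_n^\dagger$ yields
\begin{equation*}
x_n^\dagger-x_{\alpha,n,i}^{\delta,\ell}
= r_{\alpha,i}(A^{\ast}A)\,x_n^\dagger
- g_{\alpha,i}(A^{\ast}A)A^{\ast}(T_n-A)x_n^\dagger
- g_{\alpha,i}(A^{\ast}A)A^{\ast}(y_n^\delta-y_n).
\end{equation*}
The last two terms are the propagated operator and data errors. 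Using the standard filter estimate $\|g_{\alpha,i}(A^{\ast}A)A^{\ast}\|_2\le c_i\,\alpha^{-1/2}$ together with $\|T_n-A\|_2\le h_\ell$, $\|x_n^\dagger\|_2=E$ and $\|y_n^\delta-y_n\|_2\le\delta$, they are bounded by $O\bigl((Eh_\ell+C\delta)\,\alpha^{-1/2}\bigr)$. This is exactly the quantity that the parameter rule \eqref{condgk}, admissible under \eqref{condECgk}, balances against a fractional residual of the iterated scheme in the spirit of Neubauer's a posteriori principle \cite{neubauer1988a}.

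The main work, and the main obstacle, is the bias term $r_{\alpha,i}(A^{\ast}A)x_n^\dagger$, because the source condition $x_n^\dagger=(T_n^{\ast}T_n)^\nu w_n$ is expressed through $T_n$ rather than through $A=T_n^{(\ell)}$. For $\nu=1$ I would write $x_n^\dagger=T_n^{\ast}\mathcal{R}_\ell T_nw_n+T_n^{\ast}(I_n-\mathcal{R}_\ell)T_nw_n$. In the second contribution one factors $T_n^{\ast}(I_n-\mathcal{R}_\ell)=[(I_n-\mathcal{R}_\ell)T_n]^{\ast}(I_n-\mathcal{R}_\ell)$ and uses $\|r_{\alpha,i}(A^{\ast}A)\|_2\le 1$ to obtain the bound $\gamma_\ell\,\|(I_n-\mathcal{R}_\ell)T_nw_n\|_2$; in the first contribution one replaces $T_n^{\ast}\mathcal{R}_\ell$ by $A^{\ast}$ at the cost of an $O(h_\ell)$ remainder (dominated by the rate term since its exponent is below one), after which $r_{\alpha,i}(A^{\ast}A)A^{\ast}(\cdot)$ acts on data lying in $\Range(A)$ and is handled by the clean qualification estimate for iterated Tikhonov. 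The cases $0<\nu<1$ then follow by interpolating between this Lipschitz estimate and the trivial case $\nu=0$, which turns the factor $\gamma_\ell^2$ into $\gamma_\ell^{2\nu}$ and accounts for the term $O(\gamma_\ell^{2\nu}\|w_n\|_2)$; the case $\nu=0$ retains no smoothness and yields only $o(1)$.

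It then remains to combine the pieces through the parameter choice. Substituting the value of $\alpha$ determined by \eqref{condgk} into $O\bigl((Eh_\ell+C\delta)\alpha^{-1/2}\bigr)$ and into the qualification estimate $O(\alpha^{\min\{\nu,i\}})$, and invoking Neubauer's convergence-rate analysis \cite{neubauer1988a} exactly as in the appendix of \cite{bianchi2023itat}, produces the order $(h_\ell+\delta)^{2\nu i/(2\nu i+1)}$, with the big-$O$ at $\nu=1$ and the little-$o$ for $0<\nu<1$ reflecting whether the relevant supremum of $r_{\alpha,i}$ is attained in the interior or only in the limit. Since every estimate invoked depends solely on the reduced singular value decomposition, the orthonormality of the columns of $U_{\ell+1}$ and $V_\ell$, and the bounds $h_\ell$ and $\gamma_\ell$, the argument of \cite{bianchi2023itat} transfers verbatim, and I expect the genuine difficulty to lie only in the bias splitting described above, where the discrepancy between the source operator $T_n$ and the reduced operator $T_n^{(\ell)}$ is converted into the computable projection defect $\gamma_\ell$.
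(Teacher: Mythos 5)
Your proposal is correct and follows essentially the same route as the paper: the paper's entire proof consists of observing that the Golub--Kahan approximation $T_n^{(\ell)}$ has the same structural properties (SVD-based spectral representation via $B_{\ell+1,\ell}=W_{\ell+1}\Sigma_{\ell+1,\ell}S_\ell^{\ast}$, orthonormal factors, projector $\mathcal{R}_\ell$, and bound $h_\ell$) as the Arnoldi approximation $\tilde{T}_n^{(\ell)}$, so that the proofs of Propositions~\ref{comp_iAT} and \ref{convA} in \cite{bianchi2023itat} carry over verbatim. Your sketch of the internal error splitting, source-condition handling, and appeal to Neubauer's analysis is a faithful reconstruction of what is imported from \cite{bianchi2023itat}, not a departure from it.
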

    
\begin{coro}\label{convtotGK}
Assume that $x_n^{\dagger}\in\mathcal{X}_{n,1,\rho}$ and let $\alpha>0$ be the solution of
\eqref{condgk}. Then, for $\ell$ such that $h_{\ell}\sim\delta$, we have
\begin{eqnarray*}
  \|x_n^{\dagger}-x^{\delta,\ell}_{\alpha,n,i}\|_2&=&O(\delta^{\frac{2i}{2i+1}})\qquad
  \mbox{as }\delta\rightarrow 0, \\
  \|x^{\dagger}-x_{\alpha,n,i}^{\delta,\ell}\|_{\mathcal{X}}&\leq& f(n)+
  O(\delta^{\frac{2i}{2i+1}})\qquad\mbox{as }\delta\rightarrow 0.
\end{eqnarray*}
\end{coro}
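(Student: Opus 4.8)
The plan is to specialize Proposition~\ref{convGKT} to the case $\nu=1$ and then estimate each of the two resulting terms under the hypothesis $h_\ell\sim\delta$. For $\nu=1$, Proposition~\ref{convGKT} gives
\begin{equation*}
\|x_n^{\dagger}-x_{\alpha,n,i}^{\delta,\ell}\|_2=O\big((h_\ell+\delta)^{\frac{2i}{2i+1}}\big)+O\big(\gamma_\ell\|(I_n-\mathcal{R}_\ell)T_nw_n\|_2\big),
\end{equation*}
so it suffices to show that both terms are $O(\delta^{\frac{2i}{2i+1}})$ as $\delta\to0$. With $h_\ell\sim\delta$ the first term is immediately $O(\delta^{\frac{2i}{2i+1}})$.

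The key observation for the second term is that $\mathcal{R}_\ell$ is the orthogonal projector onto $\Range(T_n^{(\ell)})$, so $(I_n-\mathcal{R}_\ell)T_n^{(\ell)}=0$ and hence
\begin{equation*}
(I_n-\mathcal{R}_\ell)T_n=(I_n-\mathcal{R}_\ell)\big(T_n-T_n^{(\ell)}\big).
\end{equation*}
Since $\|I_n-\mathcal{R}_\ell\|_2=1$ and $\|T_n-T_n^{(\ell)}\|_2\leq h_\ell$ by \eqref{T_n-gk}, this yields $\gamma_\ell\leq h_\ell$ and, using $\|w_n\|_2\leq\rho$, also $\|(I_n-\mathcal{R}_\ell)T_nw_n\|_2\leq h_\ell\rho$. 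Consequently the second term is $O(h_\ell^2)=O(\delta^2)$. Because $\frac{2i}{2i+1}<1<2$ for every $i\geq1$, we have $\delta^2=o(\delta^{\frac{2i}{2i+1}})$ as $\delta\to0$, so this term is absorbed into the first. This establishes the first asymptotic bound, $\|x_n^{\dagger}-x_{\alpha,n,i}^{\delta,\ell}\|_2=O(\delta^{\frac{2i}{2i+1}})$.

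For the bound in the $\mathcal{X}$-norm I would use the triangle inequality
\begin{equation*}
\|x^{\dagger}-x_{\alpha,n,i}^{\delta,\ell}\|_{\mathcal{X}}\leq\|x^{\dagger}-x_n^{\dagger}\|_{\mathcal{X}}+\|x_n^{\dagger}-x_{\alpha,n,i}^{\delta,\ell}\|_{\mathcal{X}}.
\end{equation*}
The first term is at most $f(n)$ by \eqref{hp:x_n-convergence}. For the second, note that both $x_n^{\dagger}$ and $x_{\alpha,n,i}^{\delta,\ell}=V_\ell z_{\alpha,\ell,i}^{\delta,\ell}$ lie in $\mathcal{X}_n$, so their difference does too; the norm equivalence \eqref{hp:norm_equivalence} then bounds its $\mathcal{X}$-norm by $c_{\max}$ times the Euclidean norm of the corresponding coefficient vectors, which equals $c_{\max}\|x_n^{\dagger}-x_{\alpha,n,i}^{\delta,\ell}\|_2=O(\delta^{\frac{2i}{2i+1}})$ by the first bound. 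Combining the two terms gives the claimed estimate $f(n)+O(\delta^{\frac{2i}{2i+1}})$.

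I expect the main obstacle to be the identity $(I_n-\mathcal{R}_\ell)T_n^{(\ell)}=0$ together with the resulting reduction $\gamma_\ell\leq h_\ell$; once this is in place the remaining steps — the exponent comparison $\delta^2=o(\delta^{\frac{2i}{2i+1}})$ and the transfer from the Euclidean to the $\mathcal{X}$-norm via \eqref{hp:norm_equivalence} — are routine. This argument mirrors the proof of Corollary~\ref{convtotA} for the iAT method, with the Golub-Kahan approximation $T_n^{(\ell)}$ in \eqref{T_n-gk} replacing the Arnoldi approximation.
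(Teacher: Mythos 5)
Your proof is correct and follows essentially the same route as the paper, which obtains the corollary by specializing Proposition~\ref{convGKT} to $\nu=1$ (invoking the arguments of \cite{bianchi2023itat} that carry over verbatim): the key bound $\gamma_\ell\leq h_\ell$ via $(I_n-\mathcal{R}_\ell)T_n^{(\ell)}=0$, the absorption of the $O(h_\ell^2)$ term using $h_\ell\sim\delta$, and the passage to the $\mathcal{X}$-norm through \eqref{hp:x_n-convergence} and \eqref{hp:norm_equivalence} are exactly the intended steps.
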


As mentioned at the end of Section~\ref{sec:iAT}, if an estimate of $\|x_n^{\dagger}\|_2$ 
is not available, then we may substitute $E$ by the expression 
$D\|x_{\alpha,n,i}^{\delta,\ell}\|_2$ with a constant $D\geq 1$. With this choice, for 
$\alpha$ satisfying \eqref{condECgk}, we achieve the same convergence rates. 

We note the improvement of the convergence rate $O(\delta^{2/3})$ for standard 
(non-iterated) Tikhonov regularization.


\section{An alternative parameter choice strategy}\label{impsel}
In this section we apply a parameter choice strategy that differs from the one above. Our aim is to be able to determine more accurate approximations of the desired solution $x^\dagger$ of \eqref{eq} and to compute these solutions in lower-dimensional 
Krylov subspaces.

We now apply the result of Appendix~\ref{ssec:new}. This requires that we translate the additional hypothesis of Proposition~\ref{comp} to establish convergence rates.

\begin{ass}\label{addass}
There hold the equalities
\begin{equation*}
    \tilde{\mathcal{R}}_{\ell}T_nx_n^{\dagger}=\tilde{T}_n^{(\ell)}x_n^{\dagger}
\end{equation*}
and
\begin{equation*}
    \mathcal{R}_{\ell}T_nx_n^{\dagger}=T_n^{(\ell)}x_n^{\dagger}.
\end{equation*}
\end{ass}

Note that, if the Arnoldi or Golub-Kahan processes do not break down, the operators 
$V_{n,\ell}V_{n,\ell}^{\ast}$ and $V_{\ell}V_{\ell}^{\ast}$ converges to $I_n$ as $\ell$ 
increases. Therefore, the hypotheses of Assumptions~\ref{addass} are close to being 
satisfied for $\ell$ large enough. We can estimate the norm of the differences 
\begin{equation*}
    \|(\tilde{\mathcal{R}}_{\ell}T_n-\tilde{T}_n^{(\ell)})x_n^{\dagger}\|_2=\|\tilde{\mathcal{R}}_{\ell}T_n(I_n-V_{n,\ell}V_{n,\ell}^{\ast})x_n^{\dagger}\|_2
\end{equation*}
and
\begin{equation*}
    \|(\mathcal{R}_{\ell}T_n-T_n^{(\ell)})x_n^{\dagger}\|_2=\|\mathcal{R}_{\ell}T_n(I_n-V_{\ell}V_{\ell}^{\ast})x_n^{\dagger}\|_2.
\end{equation*}
However, these quantities might not be monotonic functions of $\ell$; see 
Example~\ref{ex1} in Section~\ref{sec:comp}.

\begin{prop}\label{impconv}
Let Assumption~\ref{addass} hold and let $\tau=1$. Set $\tilde{h}_{\ell}=h_{\ell}=0$ and 
let \eqref{condECa} and \eqref{condECgk} be satisfied. Let $\tilde{\alpha}$ and $\alpha$ 
be the unique solutions of \eqref{conda} and \eqref{condgk}, respectively. Then for all 
$\alpha^{\prime}\geq\tilde{\alpha}$ and $\alpha^{\prime\prime}\geq\alpha$, we have that 
$\|x_n^{\dagger}-\tilde{x}_{\tilde{\alpha},n,i}^{\delta,\ell}\|_2\leq\|x_n^{\dagger}-
\tilde{x}_{\alpha^{\prime},n,i}^{\delta,\ell}\|_2$ and $\|x_n^{\dagger}-x_{\alpha,n,i}^{\delta,\ell}\|_2\leq\|x_n^{\dagger}-
x_{\alpha^{\prime\prime},n,i}^{\delta,\ell}\|_2$.
\end{prop}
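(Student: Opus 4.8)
The plan is to obtain Proposition~\ref{impconv} as a direct specialization of the abstract monotonicity statement Proposition~\ref{comp}, applied once in the iAT setting and once in the iGKT setting through the dictionary of Table~\ref{tabnot}. For the iGKT case I would take $T = T_n$, $T_h = T_n^{(\ell)}$, $Q_m = \mathcal{R}_\ell$, so that $T_{h,m} = \mathcal{R}_\ell T_n^{(\ell)} = T_n^{(\ell)}$ (the last equality holding because $\mathcal{R}_\ell$ is the orthogonal projector onto $\Range(T_n^{(\ell)})$), and $x^\dagger = x_n^\dagger$; the iAT case is identical after decorating with tildes and using the reduced operator $\tilde{T}_n^{(\ell)}$. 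Because the ambient operator is the fixed matrix $T_n$ and everything is finite-dimensional, Assumption~\ref{assumptions} holds for the family $\{\Range(T_n^{(\ell)})\}_\ell$ (resp. its Arnoldi analogue): since $V_\ell V_\ell^\ast\to I_n$ (resp. $V_{n,\ell}V_{n,\ell}^\ast\to I_n$), the projector $\mathcal{R}_\ell$ tends to the identity on $\Range(T_n)=\overline{\Range(T_n)}$ as $\ell$ grows.

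I would then verify the remaining hypotheses of Proposition~\ref{comp}. The structural hypothesis $Q_m T x^\dagger = T_{h,m} x^\dagger$ becomes, under the dictionary, $\mathcal{R}_\ell T_n x_n^\dagger = T_n^{(\ell)} x_n^\dagger$ (resp. $\tilde{\mathcal{R}}_\ell T_n x_n^\dagger = \tilde{T}_n^{(\ell)} x_n^\dagger$), which is exactly Assumption~\ref{addass}. The condition \eqref{condtau}, that is $\tau\delta^2 < \|Q_m y^\delta\|^2$ with $\tau=1$, reads $\delta^2 < \|\mathcal{R}_\ell y_n^\delta\|_2^2$; with $C=1$ and $h_\ell=0$ this is inherited from the right inequality in \eqref{condECgk} (resp. \eqref{condECa}), which we assume to hold.

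The only step carrying genuine computation is to show that, once $\tau=1$, $C=1$, and $h_\ell=\tilde{h}_\ell=0$ are imposed, the abstract parameter equation \eqref{cond2} coincides with \eqref{condgk} (resp. \eqref{conda}), so that its unique solution is precisely $\alpha$ (resp. $\tilde\alpha$). I would establish this by writing $T_n^{(\ell)} = U_{\ell+1}W_{\ell+1}\Sigma_{\ell+1,\ell}S_\ell^\ast V_\ell^\ast$ from \eqref{gk} and the SVD of $B_{\ell+1,\ell}$, setting $\Phi := U_{\ell+1}W_{\ell+1}$ so that $\Phi^\ast\Phi = I_{\ell+1}$, and recording that $T_n^{(\ell)}T_n^{(\ell)\ast} = \Phi\,\Sigma_{\ell+1,\ell}\Sigma_{\ell+1,\ell}^\ast\,\Phi^\ast$ while $\mathcal{R}_\ell y_n^\delta = \Phi\,\hat{y}_{\ell+1}^\delta$. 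The orthonormality of $\Phi$ yields the resolvent identity $(T_n^{(\ell)}T_n^{(\ell)\ast}+\alpha I)^{-k}\Phi = \Phi(\Sigma_{\ell+1,\ell}\Sigma_{\ell+1,\ell}^\ast+\alpha I_{\ell+1})^{-k}$ for every $k$, whence the quadratic form defining \eqref{cond2} collapses to $(\hat{y}_{\ell+1}^\delta)^\ast(\Sigma_{\ell+1,\ell}\Sigma_{\ell+1,\ell}^\ast+\alpha I_{\ell+1})^{-2i-1}\hat{y}_{\ell+1}^\delta$, matching the left-hand side of \eqref{condgk}, while both right-hand sides equal $\delta^2$. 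The iAT identity is the same computation with $\tilde{T}_n^{(\ell)} = V_{n,\ell+1}H_{\ell+1,\ell}V_{n,\ell}^\ast$ and the SVD of $H_{\ell+1,\ell}$.

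Having matched the parameter equations and checked every hypothesis, I would invoke Proposition~\ref{comp} to conclude $\|x_n^\dagger - x_{\alpha,n,i}^{\delta,\ell}\|_2 \leq \|x_n^\dagger - x_{\alpha'',n,i}^{\delta,\ell}\|_2$ for all $\alpha''\geq\alpha$ and, in the iAT case, $\|x_n^\dagger - \tilde{x}_{\tilde\alpha,n,i}^{\delta,\ell}\|_2 \leq \|x_n^\dagger - \tilde{x}_{\alpha',n,i}^{\delta,\ell}\|_2$ for all $\alpha'\geq\tilde\alpha$. I expect the main obstacle to be purely notational bookkeeping in the SVD reduction---keeping the orthonormal factors straight so the resolvent identity is clean---together with the small point of confirming that \eqref{condtau} inherits a strict, rather than merely non-strict, inequality from \eqref{condECgk} and \eqref{condECa}.
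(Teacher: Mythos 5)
Your proposal is correct and takes essentially the same route as the paper: the paper's entire proof is the one-line remark that the result follows from Proposition~\ref{comp} by using Assumption~\ref{addass}, which is exactly the specialization you carry out via the dictionary of Table~\ref{tabnot}. Your write-up simply makes explicit the verifications the paper leaves implicit --- that $T_{h,m}=\mathcal{R}_\ell T_n^{(\ell)}=T_n^{(\ell)}$, that the SVD/resolvent computation collapses \eqref{cond2} to \eqref{condgk} (resp.\ \eqref{conda}), and the minor point that \eqref{condtau} needs the strict form of \eqref{condECgk}/\eqref{condECa} --- all of which is sound.
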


\begin{proof}
The result follows from Proposition~\ref{comp} by using Assumption~\ref{addass}. 
\end{proof}

Under Assumption~\ref{addass}, the parameter choice strategy suggested by 
Proposition~\ref{impconv} results in computed solutions of higher quality than the 
parameter choice strategy suggested by Proposition~\ref{comp_iAT} and 
Proposition~\ref{comp_iGKT}. In fact, since for both equations \eqref{conda} and 
\eqref{condgk}, the left-hand sides are monotonically increasing, using smaller 
right-hand sides yields smaller values of the regularization parameter.
Moreover, since we now set $\tilde{h}_{\ell}=0$ and $h_{\ell}=0$, both equations 
\eqref{condECa} and \eqref{condECgk} are usually satisfied for smaller values of $\ell$.
This means that we can use Krylov spaces of smaller dimension.

\begin{rmk}
We may use the parameter choice strategy of this section also when Assumption~\ref{addass} 
is not satisfied. This is illustrated in the following section.
\end{rmk}


\section{Computed examples}\label{sec:comp}
We apply the iGKT regularization method to solve several ill-posed operator equations. 
Examples~\ref{ex4}, \ref{ex3} and \ref{ex1} are image deblurring problems. 
Example~\ref{ex1} compares the iGKT and iAT methods using the parameter choice strategy of
Section~\ref{impsel}. Example~\ref{ex5} works with rectangular matrices for which only the
iGKT method can be applied. All computations were carried out using MATLAB with about 
$15$ significant decimal digits.

The matrix $T_n$ takes on one of two forms: It either serves as a model for a 
blurring operator or models a computer tomography operator. The vector $x^{\dagger}_n\in\R^{n_2}$ is a discretization of the exact solution of \eqref{eq}. Its image $y_n=T_nx_n^{\dagger}$ is 
presumed impractical to measure directly. Instead, we know an observable, 
noise-contaminated vector, $y_n^{\delta}\in\R^{n_1}$, that is obtained by adding a vector that models noise to 
$y_n$. Let the vector $e_n\in\R^{n_1}$ have 
normally distributed random entries with zero mean. We scale this vector
\begin{equation*}
  \hat{e}_n \coloneqq \frac{\xi\|y_n\|_2}{\|e_n\|_2} \, e_n
\end{equation*}
to ensure a prescribed noise level $\xi>0$. Then we define 
\begin{equation*}
    y_n^\delta \coloneqq y_n + \hat{e}_n.
\end{equation*}
Clearly,
$\delta \coloneqq \| y_n^\delta - y_n\|_2 = \xi\|y_n\|_2$. We fix the value $\xi$ for each
example such that $\delta$ will correspond to $(100\cdot\xi)\%$ of $\|y\|_2$. To achieve 
replicability in the numerical examples, we define the ``noise'' deterministically by 
setting \texttt{seed}=11 in the MATLAB function \texttt{randn}, which generates normally 
distributed pseudorandom numbers and which we use to determine the entries of the vector $e_n$.

The low-rank approximation $T_n^{(\ell)}$ of $T_n$ is computed by applying $\ell$ steps of
the Golub-Kahan process to the matrix $T_n$ with initial vector 
$v_1=y_n^{\delta}/\|y_n^{\delta}\|_2$. Thus, we first evaluate the Golub-Kahan 
decomposition \eqref{gk} and then define the matrix $T_n^{(\ell)}$ by~\eqref{T_n-gk}. We
proceed similarly for the low-rank approximation $\tilde{T}_n^{(\ell)}$ defined by 
\eqref{T_n-a}. Note that these matrices are not explicitly formed.

We determine the parameter $\alpha$ for Algorithm \ref{algo:iGKT} (iGKT) by solving 
equation \eqref{condgk} with $C=1$ and $E=\|x_n^{\dagger}\|_2$, as suggested by 
Proposition~\ref{comp_iGKT}. Inequality \eqref{condECgk} holds for all examples of this 
section. In other words, $\alpha$ is the unique solution of
\begin{equation}\label{alpha_iGKT}
\alpha^{2i+1}(\hat{y}_{\ell+1}^\delta)^{\ast}(\Sigma_{\ell+1,\ell}
\Sigma_{\ell+1,\ell}^{\ast}+\alpha I_{\ell+1})^{-2i-1}\hat{y}_{\ell+1}^\delta=
(\|x_n^{\dagger}\|_2h_{\ell}+\delta)^2.
\end{equation}

We also illustrate the performance of the alternative parameter choice strategy of 
Section~\ref{impsel}. As suggested by Proposition~\ref{impconv}, the parameter $\alpha$ 
for Algorithm \ref{algo:iGKT} (iGKT) is the unique solution of
\begin{equation}\label{alpha_iGKTimp}
\alpha^{2i+1}(\hat{y}_{\ell+1}^\delta)^{\ast}(\Sigma_{\ell+1,\ell}
\Sigma_{\ell+1,\ell}^{\ast}+\alpha I_{\ell+1})^{-2i-1}\hat{y}_{\ell+1}^\delta=\delta^2.
\end{equation}
Since we would like to compare the iGKT and iAT methods using the alternative parameter 
choice strategies suggested by Proposition~\ref{impconv}, the parameter $\tilde{\alpha}$ 
for Algorithm~\ref{algo:iAT} (iAT) is chosen to be the unique solution of 
\begin{equation}\label{alpha_iATimp}
\tilde{\alpha}^{2i+1}(\tilde{\hat{y}}_{\ell+1}^\delta)^{\ast}(\tilde{\Sigma}_{\ell+1,\ell}
\tilde{\Sigma}_{\ell+1,\ell}^{\ast}+\tilde{\alpha} I_{\ell+1})^{-2i-1}\tilde{\hat{y}}_{\ell+1}^\delta=\delta^2.
\end{equation}


\begin{exmp}\label{ex4}
We consider an image deblurring problem and use the function \texttt{PRblurspeckle} from 
the toolbox \texttt{IRtools} \cite{IRTools} to determine an $n^2\times n^2$ matrix $T_n$ 
that models blurring of an image that is represented by $n\times n$ pixels. This function 
generates a blurred image with the blur determined by a speckle point spread function that 
models atmospheric blur.

We set $n=256$. The true image is represented by the vector $x_{n}^{\dagger}\in\R^{n^2}$. 
This image is shown in Figure~\ref{fig2D4} (upper left) and the observed image 
$y_{n}^{\delta}\in\R^{n^2}$ (upper right) is obtained by adding white Gaussian noise with 
$\xi=1\%$. We test the iGKT method for several values of $\ell$ and $i$. Table~\ref{tab4} 
reports the relative approximation error for the parameter $\alpha$ obtained by solving 
equations \eqref{alpha_iGKT}. The iGKT method can be seen to perform well.

\begin{table}
\caption{Example~\ref{ex4} - Relative error in approximate solutions computed by the iGKT 
method with parameter $\alpha$ determined using \eqref{alpha_iGKT} for $n=256$ and $\xi=1\%$.}\label{tab4}
\small
\begin{tabular}{cccc}
 \toprule%
 & & \multicolumn{2}{c}{iGKT}\\
 \cmidrule{3-4}
 $\ell$ & $i$ & $\alpha$ & $\|x_n^{\dagger}-x_{\alpha,n,i}^{\delta,\ell}\|_2/
 \|x_n^{\dagger}\|_2$\\
 \midrule
 \multirow{7}*{80} & 1 & $1.59\cdot 10^{1}$ & $9.76\cdot 10^{-1}$\\
 & 50 & $5.50\cdot 10^2$ & $9.65\cdot 10^{-1}$\\
 & 100 & $3.36\cdot 10^1$ & $6.44\cdot 10^{-1}$\\
 & 200 & $5.82\cdot 10^{0}$ & $4.73\cdot 10^{-1}$\\
 & 500 & $2.03\cdot 10^{0}$ & $3.24\cdot 10^{-1}$\\
 & 1000 & $1.42\cdot 10^{0}$ & $2.50\cdot 10^{-1}$\\
 & 2000 & $1.19\cdot 10^{0}$ & $2.17\cdot 10^{-1}$\\
 \midrule
 \multirow{7}*{160} & 1 & $1.03\cdot 10^{0}$ & $7.96\cdot 10^{-1}$\\
 & 50 & $4.56\cdot 10^{1}$ & $7.49\cdot 10^{-1}$\\
 & 100 & $3.36\cdot 10^1$ & $6.44\cdot 10^{-1}$\\
 & 200 & $5.82\cdot 10^{0}$ & $4.73\cdot 10^{-1}$\\
 & 500 & $2.03\cdot 10^{0}$ & $3.24\cdot 10^{-1}$\\
 & 1000 & $1.42\cdot 10^{0}$ & $2.50\cdot 10^{-1}$\\
 & 2000 & $1.19\cdot 10^{0}$ & $2.17\cdot 10^{-1}$\\
 \midrule
 \multirow{7}*{240} & 1 & $4.55\cdot 10^{-1}$ & $7.11\cdot 10^{-1}$\\
 & 50 & $2.43\cdot 10^{1\phantom{-}}$ & $6.77\cdot 10^{-1}$\\
 & 100 & $3.36\cdot 10^{1\phantom{-}}$ & $6.44\cdot 10^{-1}$\\
 & 200 & $5.82\cdot 10^{0\phantom{-}}$ & $4.73\cdot 10^{-1}$\\
 & 500 & $2.03\cdot 10^{0\phantom{-}}$ & $3.24\cdot 10^{-1}$\\
 & 1000 & $1.42\cdot 10^{0\phantom{-}}$ & $2.50\cdot 10^{-1}$\\
 & 2000 & $1.19\cdot 10^{0\phantom{-}}$ & $2.17\cdot 10^{-1}$\\
 \bottomrule
\end{tabular}
\end{table}

Table~\ref{tab4imp} reports the relative approximation errors for the parameter $\alpha$ 
obtained by solving equation \eqref{alpha_iGKTimp}. Assumption~\ref{addass} is not satisfied 
for this example. Nevertheless, we can try to use this parameter choice rule because it
allows a smaller value of $\ell$ without loss in the quality of the reconstructions.
Figure~\ref{fig2D4} shows some reconstructed images in the second row.

\begin{table}
\caption{Example~\ref{ex4} - Relative error in approximate solutions computed by the iGKT method with parameter $\alpha$ determined using \eqref{alpha_iGKTimp} for $n=256$ and $\xi=1\%$.}\label{tab4imp}
\small
\begin{tabular}{cccc}
 \toprule%
 & & \multicolumn{2}{c}{iGKT}\\
 \cmidrule{3-4}
 $\ell$ & $i$ & $\alpha$ & $\|x_n^{\dagger}-x_{\alpha,n,i}^{\delta,\ell}\|_2/
 \|x_n^{\dagger}\|_2$\\
 \midrule
 \multirow{7}*{20} & 1 & $4.17\cdot 10^{-3}$ & $3.75\cdot 10^{-1}$\\
 & 50 & $2.58\cdot 10^{-1}$ & $3.54\cdot 10^{-1}$\\
 & 100 & $5.17\cdot 10^{-1}$ & $3.54\cdot 10^{-1}$\\
 & 200 & $1.04\cdot 10^{0\phantom{-}}$ & $3.54\cdot 10^{-1}$\\
 & 500 & $2.03\cdot 10^{0\phantom{-}}$ & $3.42\cdot 10^{-1}$\\
 & 1000 & $1.42\cdot 10^{0\phantom{-}}$ & $3.24\cdot 10^{-1}$\\
 & 2000 & $1.19\cdot 10^{0\phantom{-}}$ & $3.24\cdot 10^{-1}$\\
 \midrule
 \multirow{7}*{40} & 1 & $2.23\cdot 10^{-3}$ & $3.09\cdot 10^{-1}$\\
 & 50 & $1.19\cdot 10^{-1}$ & $2.84\cdot 10^{-1}$\\
 & 100 & $2.37\cdot 10^{-1}$ & $2.83\cdot 10^{-1}$\\
 & 200 & $4.75\cdot 10^{-1}$ & $2.83\cdot 10^{-1}$\\
 & 500 & $1.19\cdot 10^{0\phantom{-}}$ & $2.83\cdot 10^{-1}$\\
 & 1000 & $1.42\cdot 10^{0\phantom{-}}$ & $2.53\cdot 10^{-1}$\\
 & 2000 & $1.19\cdot 10^{0\phantom{-}}$ & $2.34\cdot 10^{-1}$\\
 \midrule
 \multirow{7}*{80} & 1 & $1.59\cdot 10^{-3}$ & $2.84\cdot 10^{-1}$\\
 & 50 & $8.32\cdot 10^{-2}$ & $2.59\cdot 10^{-1}$\\
 & 100 & $1.67\cdot 10^{-1}$ & $2.59\cdot 10^{-1}$\\
 & 200 & $3.33\cdot 10^{-1}$ & $2.59\cdot 10^{-1}$\\
 & 500 & $8.34\cdot 10^{-1}$ & $2.59\cdot 10^{-1}$\\
 & 1000 & $1.42\cdot 10^{0\phantom{-}}$ & $2.50\cdot 10^{-1}$\\
 & 2000 & $1.19\cdot 10^{0\phantom{-}}$ & $2.17\cdot 10^{-1}$\\
 \bottomrule
\end{tabular}
\end{table}

\begin{figure}
\centering
{\includegraphics[scale=1]{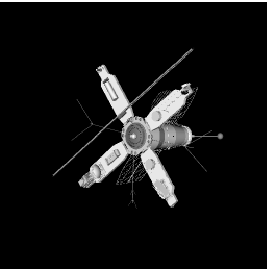}} 
\hspace{2cm}
{\includegraphics[scale=1]{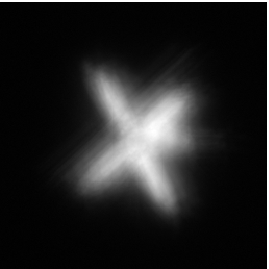}}\\
{\includegraphics[scale=1]{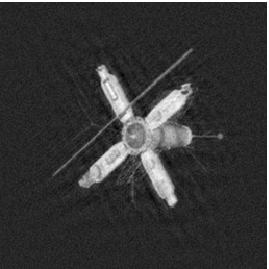}}
\hspace{2cm}
{\includegraphics[scale=1]{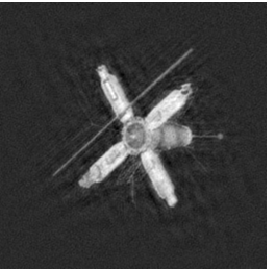}}
\caption{Example~\ref{ex4} - Exact solution $x^{\dagger}_{n}$ (upper left) and observed 
image $y_n^{\delta}$ (upper right). Approximate solutions $x^{\delta,\ell}_{\alpha,n,i}$ 
computed by the iGKT method with $\ell=80$ and $i=2000$ with $\alpha$ determined using 
\eqref{alpha_iGKT} (lower left) and $x^{\delta,\ell}_{\alpha,n,i}$ computed by the iGKT 
method with $\ell=40$ and $i=2000$ with $\alpha$ determined using \eqref{alpha_iGKTimp} 
(lower right). Here $n=256$ and $\xi= 1\%$.}\label{fig2D4}
\end{figure}

\end{exmp}


\begin{exmp}\label{ex3}
We consider another image deblurring problem and use the function \texttt{PRblurshake} 
from \texttt{IRtools} with option \texttt{BlurLevel}=\texttt{`severe'}. The MATLAB 
function \texttt{rng} is applied with the parameter values \texttt{seed}=11 and 
\texttt{generator}=\texttt{`twister'} to determine an $n^2\times n^2$ blurring matrix 
$T_{n}$ and a blurred and noisy image that is represented by $n\times n$ pixels. The 
blur simulates random camera motion (shaking). The noise is white Gaussian with $\xi=1\%$.

We set $n=256$; the true image is represented by the vector $x_{n}^{\dagger}\in\R^{n^2}$.
This image is shown in Figure~\ref{fig2D3} (upper left) with the observed image 
$y_{n}^{\delta}\in\R^{n^2}$ (upper right).

Table~\ref{tab3} shows results for the iGKT method with the relative error of the computed 
approximations corresponding to the parameter $\alpha$ that is determined by solving 
equation~\eqref{alpha_iGKT} or  equation \eqref{alpha_iGKTimp} for several values of 
$\ell$ and $i$. Assumptions~\ref{addass} are not satisfied. However, the alternative 
parameter selection strategy allows smaller values of $\ell$.

\begin{table}
\caption{Example~\ref{ex3} - Relative error in approximate solutions computed by the iGKT method with parameter $\alpha$ determined using \eqref{alpha_iGKT} and \eqref{alpha_iGKTimp}, for $n=256$ and $\xi= 1\%$.}\label{tab3}
\small
\begin{tabular}{ccccccc}
 \toprule%
 & & \multicolumn{2}{c}{iGKT \eqref{alpha_iGKT}} & & \multicolumn{2}{c}{iGKT \eqref{alpha_iGKTimp}} \\
 \cmidrule{3-4}
 \cmidrule{6-7}
 $\ell$ & $i$ & $\alpha$ & $\|x_n^{\dagger}-x_{\alpha,n,i}^{\delta,\ell}\|_2/
 \|x_n^{\dagger}\|_2$ &  & $\tilde{\alpha}$ & $\|x_n^{\dagger}-\tilde{x}_{\tilde{\alpha},n,i}^{\delta,\ell}\|_2/\|x_n^{\dagger}\|_2$\\
 \midrule
 \multirow{5}*{30} & 1 & - & - & & $7.56\cdot 10^{-3}$ & $1.80\cdot 10^{-1}$\\
 & 50 & - & - & & $4.04\cdot 10^{-1}$ & $1.64\cdot 10^{-1}$\\
 & 100 & - & - & & $8.08\cdot 10^{-1}$ & $1.64\cdot 10^{-1}$\\
 & 200 & - & - & & $1.62\cdot 10^{0\phantom{-}}$ & $1.64\cdot 10^{-1}$\\
 & 500 & - & - & & $2.02\cdot 10^{0\phantom{-}}$ & $1.45\cdot 10^{-1}$\\
 \midrule
 \multirow{5}*{60} & 1 & $1.81\cdot 10^{1}$ & $9.63\cdot 10^{-1}$ & & $6.17\cdot 10^{-3}$ & $1.72\cdot 10^{-1}$\\
 & 50 & $6.23\cdot 10^2$  & $9.46\cdot 10^{-1}$ & & $3.26\cdot 10^{-1}$ & $1.57\cdot 10^{-1}$\\
 & 100 & $3.35\cdot 10^1$  & $3.91\cdot 10^{-1}$ & & $6.53\cdot 10^{-1}$  & $1.57\cdot 10^{-1}$\\
 & 200 & $5.81\cdot 10^{0}$ & $2.25\cdot 10^{-1}$ & & $1.31\cdot 10^{0\phantom{-}}$  & $1.57\cdot 10^{-1}$\\
 & 500 & $2.02\cdot 10^{0}$ & $1.45\cdot 10^{-1}$ & & $2.02\cdot 10^{0\phantom{-}}$  & $1.45\cdot 10^{-1}$\\
 \midrule
 \multirow{5}*{120}  & 1 & $1.60\cdot 10^{0}$ & $7.27\cdot 10^{-1}$ & & $5.81\cdot 10^{-3}$ & $1.69\cdot 10^{-1}$\\
 & 50 & $6.71\cdot 10^1$ & $6.43\cdot 10^{-1}$ & & $3.07\cdot 10^{-1}$ & $1.55\cdot 10^{-1}$\\
 & 100 & $3.35\cdot 10^1$ & $3.91\cdot 10^{-1}$ & & $6.14\cdot 10^{-1}$ & $1.55\cdot 10^{-1}$\\
 & 200 & $5.81\cdot 10^{0}$ & $2.25\cdot 10^{-1}$ & & $1.23\cdot 10^{0\phantom{-}}$ & $1.55\cdot 10^{-1}$\\
 & 500 & $2.02\cdot 10^{0}$ & $1.45\cdot 10^{-1}$ & & $2.02\cdot 10^{0\phantom{-}}$ & $1.45\cdot 10^{-1}$\\
 \bottomrule
\end{tabular}
\end{table}

Reconstructed images for both parameter choice strategies are shown in Figure~\ref{fig2D3}.

\begin{figure}
\centering
{\includegraphics[scale=1]{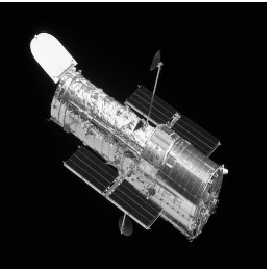}} 
\hspace{2cm}
{\includegraphics[scale=1]{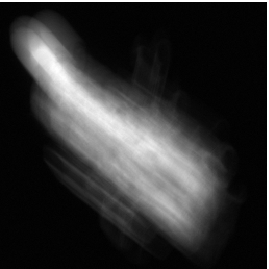}}\\
{\includegraphics[scale=1]{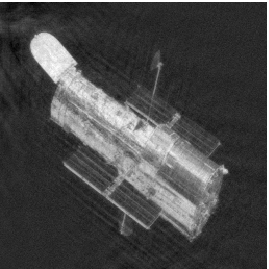}}
\hspace{2cm}
{\includegraphics[scale=1]{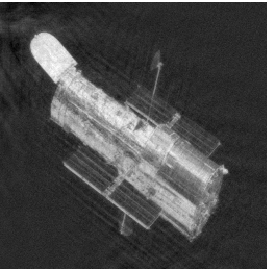}}
\caption{Example~\ref{ex3} - Exact solution $x^{\dagger}_{n}$ (Up-Left) and observed image
$y_n^{\delta}$ (upper right). Approximate solutions $x^{\delta,\ell}_{\alpha,n,i}$ 
computed by the iGKT method with $\ell=60$, $i=500$, and $\alpha$ determined by 
solving \eqref{alpha_iGKT} (lower left) and $x^{\delta,\ell}_{\alpha,n,i}$ computed by the 
iGKT method with $\ell=30$, $i=500$, and $\alpha$ determined using \eqref{alpha_iGKTimp} 
(lower right). Here $n=256$ and $\xi= 1\%$.}\label{fig2D3}
\end{figure}

\end{exmp}


\begin{exmp}\label{ex1}
This example considers the restoration of an image that has been contaminated by motion
blur and noise. Thus, we use an $n^2\times n^2$ psfMatrix $T_n$ that simulates motion blur
with $n=256$. Figure~\ref{fig2D1} shows the true image represented by 
$x_n^{\dagger}\in\R^{n^2}$ (upper left) and the observed blurred and noisy image 
represented by $y_n^{\delta}\in\R^{n^2}$ (upper right). The noise is white Gaussian with 
$\xi=2\%$.

We compare the iGKT and iAT methods for several values of $\ell$ and $i$. The inequalities 
\eqref{condECa} and \eqref{condECgk} are not satisfied. We show results for the alternative 
parameter choice strategy. Table~\ref{tab1imp} reports the relative approximation errors for 
the parameters $\alpha$ and  $\tilde\alpha$, obtained by solving equations 
\eqref{alpha_iGKTimp} and \eqref{alpha_iATimp}, respectively. Assumptions~\ref{addass} are 
not satisfied. The iGKT method gives better reconstructions than the iAT method. We also 
report the average computing times. They show the iAT method to be slightly faster.

\begin{table}
\caption{Example~\ref{ex1} - Relative error in approximate solutions computed by the iGKT
and iAT methods with the parameters $\alpha$ and $\tilde{\alpha}$ determined by using 
\eqref{alpha_iGKTimp} and \eqref{alpha_iATimp}, respectively. Average computing times are
reported and $n=256$ and $\xi=2\%$.}\label{tab1imp}
\footnotesize
\begin{tabular}{ccccccccc}
 \toprule%
 & & \multicolumn{3}{c}{iGKT} & & \multicolumn{3}{c}{iAT} \\
 \cmidrule{3-5}
 \cmidrule{7-9}
 $\ell$ & $i$ & $\alpha$ & $\|x_n^{\dagger}-x_{\alpha,n,i}^{\delta,\ell}\|_2/
 \|x_n^{\dagger}\|_2$ & \text{time (s)} &  & $\tilde{\alpha}$ & $\|x_n^{\dagger}-\tilde{x}_{\tilde{\alpha},n,i}^{\delta,\ell}\|_2/\|x_n^{\dagger}\|_2$ & \text{time (s)}\\
 \midrule
 \multirow{5}*{10} & 1 & $6.10\cdot 10^{-2}$ & $1.35\cdot 10^{-1}$ & $0.26$ & & $6.25\cdot 10^{-2}$ & $1.49\cdot 10^{-1}$ & $0.16$\\
 & 50 & $4.83\cdot 10^{0\phantom{-}}$ & $1.24\cdot 10^{-1}$ & $0.26$ & & $5.20\cdot 10^{0\phantom{-}}$ & $1.43\cdot 10^{-1}$ & $0.16$\\
 & 100 & $9.66\cdot 10^{0\phantom{-}}$ & $1.24\cdot 10^{-1}$ & $0.27$ & & $1.04\cdot 10^{1\phantom{-}}$ & $1.43\cdot 10^{-1}$ & $0.16$\\
 & 150 & $1.02\cdot 10^{0\phantom{-}}$ & $1.15\cdot 10^{-1}$ & $0.27$ & & $1.02\cdot 10^{1\phantom{-}}$ & $1.46\cdot 10^{-1}$ & $0.17$\\
 & 200 & $5.72\cdot 10^{0\phantom{-}}$ & $1.02\cdot 10^{-1}$ & $0.27$ & & $5.72\cdot 10^{0\phantom{-}}$ & $1.57\cdot 10^{-1}$ & $0.17$\\
 \midrule
 \multirow{5}*{20} & 1 & $4.96\cdot 10^{-2}$ & $1.26\cdot 10^{-1}$ & $0.53$ & & $5.17\cdot 10^{-2}$ & $1.47\cdot 10^{-1}$ & $0.32$\\
 & 50 & $3.30\cdot 10^{0\phantom{-}}$ & $1.14\cdot 10^{-1}$ & $0.53$ & & $3.69\cdot 10^{0\phantom{-}}$ & $1.44\cdot 10^{-1}$ & $0.32$\\
 & 100 & $6.60\cdot 10^{0\phantom{-}}$ & $1.14\cdot 10^{-1}$ & $0.54$ & & $7.38\cdot 10^{0\phantom{-}}$ & $1.44\cdot 10^{-1}$ & $0.31$\\
 & 150 & $9.90\cdot 10^{0\phantom{-}}$ & $1.14\cdot 10^{-1}$ & $0.54$ & & $1.02\cdot 10^{1\phantom{-}}$ & $1.45\cdot 10^{-1}$ & $0.32$\\
 & 200 & $5.72\cdot 10^{0\phantom{-}}$ & $1.00\cdot 10^{-1}$ & $0.54$ & & $5.72\cdot 10^{0\phantom{-}}$ & $1.58\cdot 10^{-1}$ & $0.32$\\
 \midrule
 \multirow{5}*{30} & 1 & $4.71\cdot 10^{-2}$ & $1.24\cdot 10^{-1}$ & $0.80$ & & $5.12\cdot 10^{-2}$ & $1.46\cdot 10^{-1}$ & $0.48$\\
 & 50 & $3.03\cdot 10^{0\phantom{-}}$ & $1.12\cdot 10^{-1}$ & $0.80$ & & $3.63\cdot 10^{0\phantom{-}}$ & $1.44\cdot 10^{-1}$ & $0.47$\\
 & 100 & $6.06\cdot 10^{0\phantom{-}}$ & $1.12\cdot 10^{-1}$ & $0.80$ & & $7.25\cdot 10^{0\phantom{-}}$ & $1.44\cdot 10^{-1}$ & $0.48$\\
 & 150 & $9.09\cdot 10^{0\phantom{-}}$ & $1.12\cdot 10^{-1}$ & $0.81$ & & $1.02\cdot 10^{0\phantom{-}}$ & $1.45\cdot 10^{-1}$ & $0.48$\\
 & 200 & $5.72\cdot 10^{0\phantom{-}}$ & $1.00\cdot 10^{-1}$ & $0.80$ & & $5.72\cdot 10^{0\phantom{-}}$ & $1.58\cdot 10^{-1}$ & $0.48$\\
 \bottomrule
\end{tabular}
\end{table}

Figure~\ref{figass} displays the relative norm of the difference from 
Assumption~\ref{addass} for several values of $\ell$. We can see that for the iGKT method 
this norm decrease rapidly and monotonically, but not for the iAT method. The figure also
shows the values of $h_{\ell}$ and $\tilde{h}_{\ell}$.

\begin{figure}
\centering
{\includegraphics[scale=0.3]{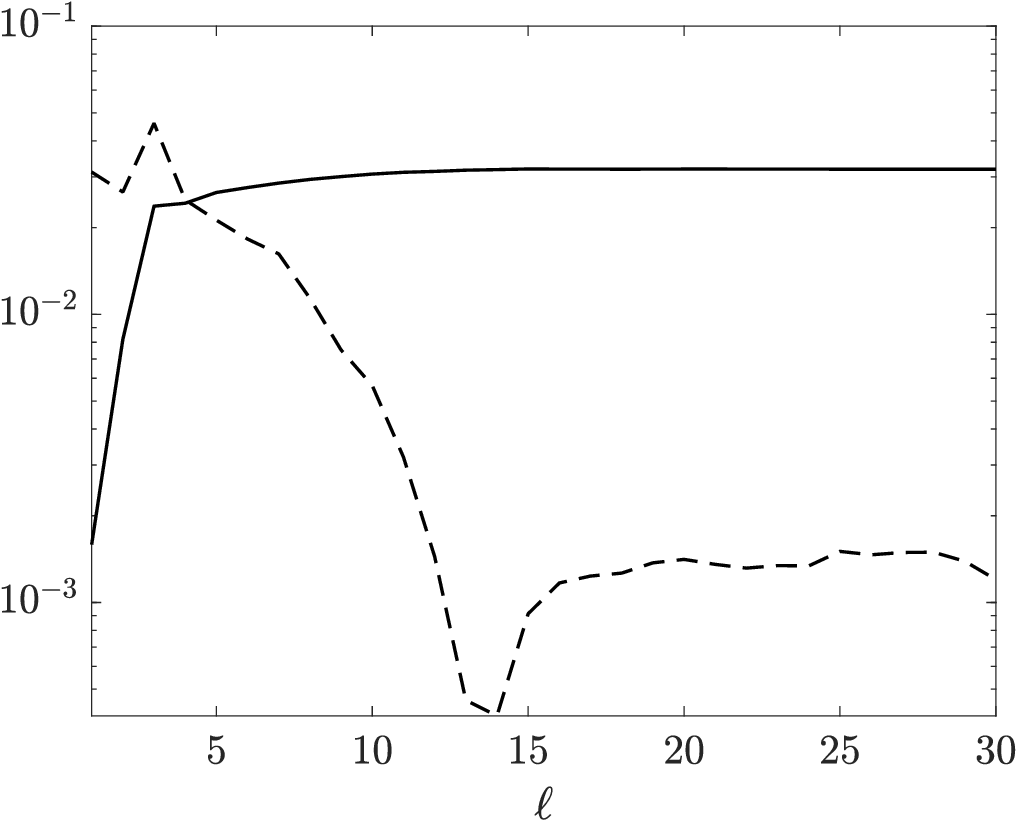}} 
\hspace{2cm}
{\includegraphics[scale=0.3]{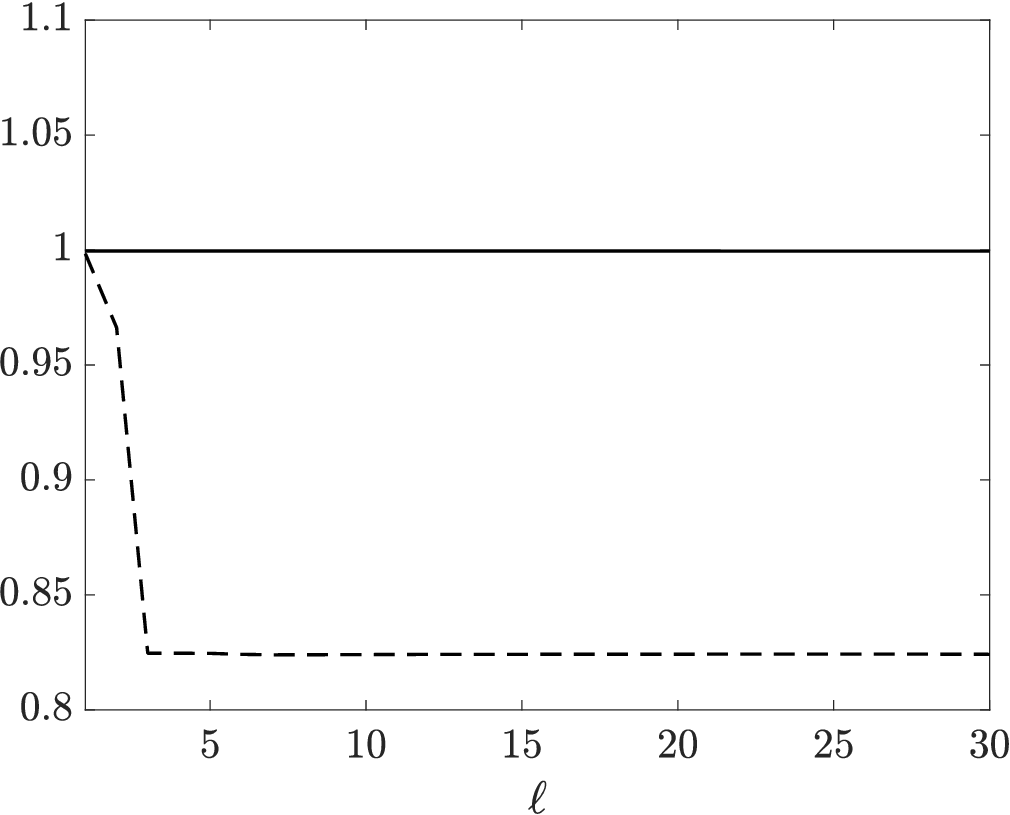}} 
\caption{Example~\ref{ex1} - (Left) Values of $\frac{\|(\tilde{\mathcal{R}}_{\ell}T_n-\tilde{T}_n^{(\ell)})x_n^{\dagger}\|_2}{\|\tilde{\mathcal{R}}_{\ell}T_nx_n^{\dagger}\|_2}$ (continuous line) and $\frac{\|(\mathcal{R}_{\ell}T_n-T_n^{(\ell)})x_n^{\dagger}\|_2}{\|\mathcal{R}_{\ell}T_nx_n^{\dagger}\|_2}$ (dashed line) in logarithmic scale. (Right) Values of $\frac{\tilde{h}_{\ell}}{\|T_n\|_2}$ (continuous line) and $\frac{h_{\ell}}{\|T_n\|_2}$ (dashed line). Here $\ell=1,\ldots,30$.}\label{figass}
\end{figure}

Figure~\ref{fig2D1} shows some reconstructed images in the second row.

\begin{figure}
\centering
{\includegraphics[scale=1]{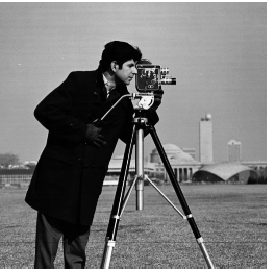}} 
\hspace{2cm}
{\includegraphics[scale=1]{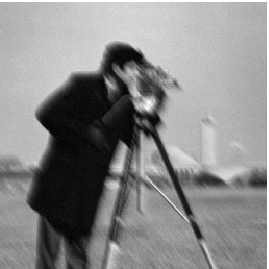}}\\
{\includegraphics[scale=1]{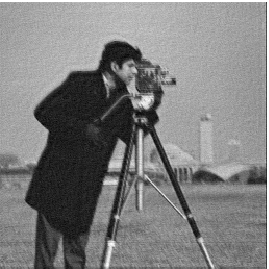}}
\hspace{2cm}
{\includegraphics[scale=1]{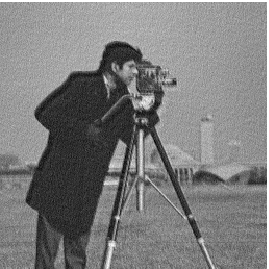}}
\caption{Example~\ref{ex1} - True image $x^{\dagger}_{n}$ (upper left) and observed image 
$y_n^{\delta}$ (upper right). Approximate solutions $x^{\delta,\ell}_{\alpha,n,i}$ computed
by the iGKT method with $i=200$ and $\alpha$ determined using \eqref{alpha_iGKTimp} 
(lower left) and $x^{\delta,\ell}_{\alpha,n,i}$ computed by the iAT method with $i=100$ and
$\tilde{\alpha}$ determined using \eqref{alpha_iATimp} (lower right) with $\ell=20$. 
Here $n=256$ and $\xi= 2\%$.}\label{fig2D1}
\end{figure}
\end{exmp}


\begin{exmp}\label{ex5}
This example is concerned with a computerized tomography problem. We use the function 
\texttt{PRtomo} from \texttt{IRtools} setting $\lfloor n\sqrt{2}\rfloor$ number of rays
and $180$ uniformly distributed angles from $0$ to $\pi$, to determine an 
$180\lfloor n\sqrt{2}\rfloor\times n^2$ matrix $T_{n}$. The true image, represented by the
vector $x_{n}^{\dagger}\in\R^{n^2}$ for $n=256$, is shown in Figure~\ref{fig2D5} 
(upper left). The observed data (the sinogram) 
$y_{n}^{\delta}\in\R^{180\lfloor n\sqrt{2}\rfloor}$ is depicted in Figure~\ref{fig2D5} 
(upper right). The noise is $\xi=1\%$.

Table~\ref{tab5} collects some relative approximation errors for the iGKT method. The 
parameter $\alpha$ is obtained by solving equation \eqref{alpha_iGKT} or by solving 
equation \eqref{alpha_iGKTimp} for various values of $\ell$ and $i$. Assumption~\ref{addass} 
is not satisfied. The alternative parameter selection strategy allows smaller values of 
$\ell$ and yields good reconstructions for small values of $i$.

\begin{table}
\caption{Example~\ref{ex5} - Relative error in approximate solutions computed by the iGKT method with parameter $\alpha$ determined using \eqref{alpha_iGKT} and \eqref{alpha_iGKTimp}, for $n=256$ and $\xi= 1\%$.}\label{tab5}
\small
\begin{tabular}{ccccccc}
 \toprule%
 & & \multicolumn{2}{c}{iGKT \eqref{alpha_iGKT}} & & \multicolumn{2}{c}{iGKT \eqref{alpha_iGKTimp}} \\
 \cmidrule{3-4}
 \cmidrule{6-7}
 $\ell$ & $i$ & $\alpha$ & $\|x_n^{\dagger}-x_{\alpha,n,i}^{\delta,\ell}\|_2/
 \|x_n^{\dagger}\|_2$ &  & $\tilde{\alpha}$ & $\|x_n^{\dagger}-\tilde{x}_{\tilde{\alpha},n,i}^{\delta,\ell}\|_2/\|x_n^{\dagger}\|_2$\\
 \midrule
 \multirow{4}*{3} & 1 & - & - & & $6.36\cdot 10^{2}$ & $5.65\cdot 10^{-1}$\\
 & 25 & - & - & & $3.89\cdot 10^{4}$ & $5.53\cdot 10^{-1}$\\
 & 50 & - & - & & $1.03\cdot 10^{3}$ & $5.47\cdot 10^{-1}$\\
 & 100 & - & - & & $3.27\cdot 10^{1}$ & $5.47\cdot 10^{-1}$\\
 \midrule
 \multirow{4}*{6} & 1 & $1.40\cdot 10^{6}$ & $9.90\cdot 10^{-1}$ & & $2.34\cdot 10^{2}$ & $3.34\cdot 10^{-1}$\\
 & 25 & $9.29\cdot 10^{5}$  & $8.06\cdot 10^{-1}$ & & $1.13\cdot 10^{4}$ & $3.12\cdot 10^{-1}$\\
 & 50 & $1.03\cdot 10^{3}$  & $2.99\cdot 10^{-1}$ & & $1.03\cdot 10^{3}$ & $2.99\cdot 10^{-1}$\\
 & 100 & $3.27\cdot 10^{1}$  & $2.99\cdot 10^{-1}$ & & $3.26\cdot 10^{1}$ & $2.99\cdot 10^{-1}$\\
 \midrule
 \multirow{4}*{12} & 1 & $2.59\cdot 10^{5}$ & $9.52\cdot 10^{-1}$ & & $1.47\cdot 10^{2}$ & $2.22\cdot 10^{-1}$\\
 & 25 & $9.29\cdot 10^{5}$  & $8.06\cdot 10^{-1}$ & & $5.76\cdot 10^{3}$ & $1.96\cdot 10^{-1}$\\
 & 50 & $1.03\cdot 10^{3}$  & $1.78\cdot 10^{-1}$ & & $1.03\cdot 10^{3}$ & $1.78\cdot 10^{-1}$\\
 & 100 & $3.27\cdot 10^{1}$  & $1.78\cdot 10^{-1}$ & & $3.27\cdot 10^{1}$ & $1.78\cdot 10^{-1}$\\
 \bottomrule
\end{tabular}
\end{table}

Two reconstructions are shown in the second row of Figure~\ref{fig2D5}.

\begin{figure}
\centering
{\includegraphics[scale=1]{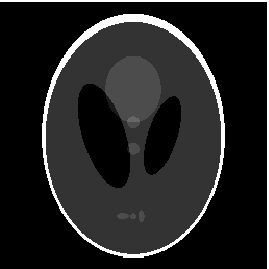}} 
\hspace{2cm}
{\includegraphics[width=4.6cm, height=4.6cm]{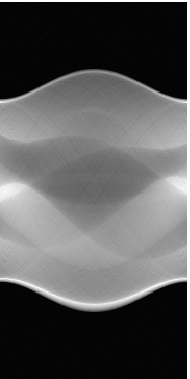}}\\
{\includegraphics[scale=1]{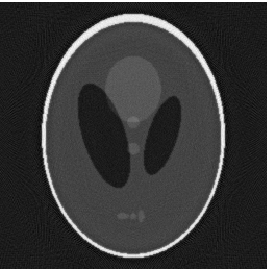}}
\hspace{2cm}
{\includegraphics[scale=1]{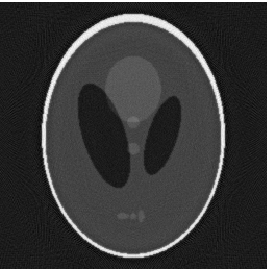}}
\caption{Example~\ref{ex5} - Exact solution $x^{\dagger}_{n}$ (Up-Left) and observed data 
(sinogram) $y_n^{\delta}$ (upper right). Approximate solutions $x^{\delta,\ell}_{\alpha,n,i}$ 
computed by the iGKT method with $\alpha$ determined by \eqref{alpha_iGKT} (lower left) 
and $x^{\delta,\ell}_{\alpha,n,i}$ computed by the iGKT method with $\alpha$ determined by
\eqref{alpha_iGKTimp} (lower right) with $\ell=12$ and $i=50$. Here $n=256$ and 
$\xi= 1\%$.}\label{fig2D5}
\end{figure}
\end{exmp}

\section{Conclusion}\label{sec:end}
The paper presents a convergence analysis for iterated Tikhonov regularization based on
partial Golub-Kahan bidiagonalization. Two approaches for choosing the regularization 
parameter are discussed. Our convergence analysis improves the convergence analysis in
\cite{bianchi2023itat}.

\appendix
\setcounter{section}{0}  

\section{Appendix}\label{sec:appA}

\renewcommand{\theequation}{\thesection.\arabic{equation}} 
\makeatletter  
\@addtoreset{equation}{section} 
\makeatother

\newtheorem{appendixtheorem}{Theorem}[section]  
\newtheorem{appendixproposition}[appendixtheorem]{Proposition}
\newtheorem{appendixlemma}[appendixtheorem]{Lemma}
\renewcommand{\theappendixtheorem}{\thesection.\arabic{appendixtheorem}}
\renewcommand{\theappendixproposition}{\thesection.\arabic{appendixtheorem}}
\renewcommand{\theappendixlemma}{\thesection.\arabic{appendixtheorem}}

In this section, we provide the proofs for our results.
The first subsection provides the analysis applied in Section~\ref{impsel} and
the second subsection gives more details of some of the proofs in \cite{bianchi2023itat} that may be of interest to a reader. Moreover, we present an updated version of these results by modifying a specific condition.

Our analysis is done in a general setting. To enhance readability, we provide Table~\ref{tabnot}, which connects the notation used in this 
section with the notation used for the iAT and iGKT methods in the previous sections.

\begin{table}[h!]
    \centering
    \begin{tabular}{|c|c|c|c|c|c|c|}
        \hline
        Notation of this section & \( T \) & \( T_h \) & \(  Q_m  \) & \(  T_{h,m}\coloneqq Q_mT_h \) & \(  x^{\dagger} \) & \(  x_{\alpha,m,i}^{\delta,h} \) \\
        \hline
        Notation of iAT & \( T_n \) & \( \tilde{T}_n^{(\ell)} \) & \( \tilde{\mathcal{R}}_{\ell} \) & \( T_n^{(\ell)} \) & \(  x_n^{\dagger} \) & \(  \tilde{x}_{\tilde{\alpha},n,i}^{\delta,\ell} \) \\
        \hline
        Notation of iGKT & \( T_n \) & \( T_n^{(\ell)} \) & \( \mathcal{R}_{\ell} \) & \( T_n^{(\ell)} \) & \(  x_n^{\dagger} \) & \(  x_{\alpha,n,i}^{\delta,\ell} \) \\
        \hline
    \end{tabular}
    \caption{Comparison of notation.}\label{tabnot}
\end{table}

\subsection{A different parameter choice strategy}\label{ssec:new}

\begin{ass}\label{assumptions}
Let $T\colon\mathcal{X}\to \mathcal{Y}$ denote a bounded linear operator between separable
Hilbert spaces $\mathcal{X}$ and $\mathcal{Y}$ with norms $\|\cdot\|_{\mathcal X}$ and
$\|\cdot\|_{\mathcal Y}$, respectively, that are induced by inner products, and let $T_h$ 
be an approximation of $T$. Consider a family $\lbrace W_m\rbrace_{m\in\N}$ of 
finite-dimensional subspaces of $\mathcal{Y}$ such that the orthogonal projector $Q_m$ 
into $W_m$ converges to $I$ on $\overline{\Range{T}}$.
\end{ass}

Define the operator $T_{h,m}\coloneqq Q_mT_h$. Using the iterated Tikhonov (iT) method 
(see \cite[Section 5]{engl1996}) applied to the solution of equation \eqref{eqdelta} with 
the operator $T_{h,m}$, we define the approximate solution
\begin{equation}\label{it}
x_{\alpha,m,i}^{\delta,h}\coloneqq\sum_{k=1}^i\alpha^{k-1}(T_{h,m}^{\ast}T_{h,m}+
\alpha I)^{-k}T_{h,m}^{\ast}y^{\delta}
\end{equation}
of \eqref{eqdelta}. Consider for $\tau\geq 1$ the equation for $\alpha$,
\begin{equation}\label{cond2}
\alpha^{2i+1}\langle (T_{h,m}T_{h,m}^{\ast}+\alpha I)^{-2i-1}Q_my^{\delta},Q_my^{\delta}
\rangle=\tau\delta^2.
\end{equation}
Similarly as in \cite[Proposition A.6]{bianchi2023itat}, one can show that there is a 
unique solution $\alpha$ of \eqref{cond2} provided that
\begin{equation}\label{condtau}
  \tau\delta^2<\|Q_my^{\delta}\|^2.
\end{equation}

\begin{appendixproposition}\label{comp}
Let Assumption~\ref{assumptions} and \eqref{condtau} hold with $\tau=1$. If 
$Q_mTx^{\dagger}=T_{h,m}x^{\dagger}$ and $\alpha$ is the unique solution of \eqref{cond2},
then for all $\alpha^{\prime}\geq\alpha$, we have 
$\|x^{\dagger}-x_{\alpha,m,i}^{\delta,h}\|\leq
\|x^{\dagger}-x_{\alpha^{\prime},m,i}^{\delta,h}\|$.
\end{appendixproposition}

\begin{proof}
Let $\lbrace F_{\mu}^{h,m}\rbrace_{\mu\in\R}$ be a spectral family for 
$T_{h,m}T^{\ast}_{h,m}$. We recall that for any self-adjoint operator $A$, there exists a 
unique spectral family $\lbrace E_{\mu}\rbrace_{\mu\in\R}$ that is a collection of 
projectors such that 
\begin{equation*}
f(A)x=\int_{0}^{\infty}f(\mu)dE_{\mu}x\coloneqq\lim_{n\to\infty}
\sum_{k=1}^nf(\mu_k)(E_{\mu_{k}}-E_{\mu_{k-1}})x,
\end{equation*}
where on the right-hand side we are considering the limit of a Riemann sum for any 
continuous function $f$ and $x\in\mathcal{X}$; see \cite[Section 2.3]{engl1996} for a 
precise definition.

Define $e(\alpha)\coloneqq\frac{1}{2}\|x^{\dagger}-x_{\alpha,m,i}^{\delta,h}\|^2$. It 
follows from the hypothesis that $T_{h,m}x^{\dagger}=Q_mTx^{\dagger}=Q_my$ and
\begin{align*}
\frac{de(\alpha)}{d\alpha}&=i\biggl\langle T_{h,m}x^{\dagger}-\int_0^{\infty}\frac{(\mu+\alpha)^i-\alpha^i}{(\mu+\alpha)^i}dF_{\mu}^{h,m}Q_my^{\delta},\int_{0}^{\infty}\frac{\alpha^{i-1}}{(\mu+\alpha)^{i+1}}dF_{\mu}^{h,m}Q_my^{\delta}\biggr\rangle\\
&=i\biggl\langle Q_my-\int_0^{\infty}\frac{(\mu+\alpha)^i-\alpha^i}{(\mu+\alpha)^i}dF_{\mu}^{h,m}Q_my^{\delta},\int_{0}^{\infty}\frac{\alpha^{i-1}}{(\mu+\alpha)^{i+1}}dF_{\mu}^{h,m}Q_my^{\delta}\biggr\rangle.
\end{align*}
Adding and subtracting 
$i\alpha^{2i-1}\|(T_{h,m}T_{h,m}^{\ast}+\alpha I)^{\frac{-2i-1}{2}}Q_my^{\delta}\|^2$, we obtain
\begin{align*}
  \frac{de(\alpha)}{d\alpha}=&i\alpha^{2i-1}\|(T_{h,m}T_{h,m}^{\ast}+\alpha I)^{\frac{-2i-1}{2}}Q_my^{\delta}\|^2\\
  &+i\biggl\langle\int_{0}^{\infty}\frac{\alpha^{i-1}}{(\mu+\alpha)^{\frac{1}{2}}}dF_{\mu}^{h,m}Q_m(y-y^{\delta}),(T_{h,m}T_{h,m}^{\ast}+\alpha I)^{\frac{-2i-1}{2}}Q_my^{\delta}\biggr\rangle,
\end{align*}
and collecting 
$i\|(T_{h,m}T_{h,m}^{\ast}+\alpha I)^{\frac{-2i-1}{2}}Q_my^{\delta}\|\eqqcolon K$ from the
two terms gives
\begin{align*}
  \frac{de(\alpha)}{d\alpha}\geq K\left(\alpha^{2i-1}\|(T_{h,m}T_{h,m}^{\ast}+\alpha I)^{\frac{-2i-1}{2}}Q_my^{\delta}\|-\biggl\|\int_{0}^{\infty}\frac{\alpha^{i-1}}{(\mu+\alpha)^{\frac{1}{2}}}dF_{\mu}^{h,m}Q_m(y-y^{\delta})\biggr\|\right).
\end{align*}
The proposition now follows from
\begin{align*}
  \biggl\|\int_{0}^{\infty}\frac{\alpha^{i-1}}{(\mu+\alpha)^{\frac{1}{2}}}dF_{\mu}^{h,m}Q_m(y-y^{\delta})\biggr\|\leq\alpha^{i-\frac{3}{2}}\delta.
\end{align*}
\end{proof}

\begin{rmk}
This analysis is an addendum to the work of \cite[Appendix A]{bianchi2023itat}. Here equation \eqref{cond2} and condition \eqref{condtau} are a special case of \cite[equation (A.27)]{bianchi2023itat} and condition \cite[equation~(A.26)]{bianchi2023itat} respectively, under the new assumption $Q_mTx^{\dagger}=T_{h,m}x^{\dagger}$.
\end{rmk}


\subsection{More details on the convergence analysis}

In this section we provide  more insights on some of the results presented in \cite{bianchi2023itat}. Consider Assumptions~\ref{assumptions} along with the following conditions.

\begin{ass}
Let $W_m\subset\mathcal{Y}$, $T_h\in \mathcal{L}(\mathcal{X},\mathcal{Y})$, $\delta\geq 0$, and 
$y^{\delta}\in\mathcal{Y}$ be such that
\begin{equation}\label{conditions}\tag{C}
\begin{split}
    W_m\subset\overline{\Range(T)},\qquad &\Range(Q_mT_h)=W_m,\\
    \|Q_m(T-T_h)\|\leq h,\qquad &\|Q_m(y-y^{\delta})\|\leq\delta.
\end{split}
\end{equation}
\end{ass}

We define  the operator $T_m\coloneqq Q_mT$ and recall the well known notation for the source set
\begin{equation*}
    \mathcal{X}_{\nu,\rho}\coloneqq\lbrace x\in\mathcal{X}\mid x=(T^{\ast}T)^{\nu}w, w\in\ker(T)^{\perp} \text{ and }\|w\|\leq\rho\rbrace.
\end{equation*}

We first present a proof for an improved version of \cite[Lemma A.3]{bianchi2023itat}; see Remark~\ref{finalrmk}.

\begin{appendixlemma}\label{lemlimv1}
Let $x^{\dagger}\in\mathcal{X}_{\nu,\rho}$ and let $w$ be fixed. For $\nu\in[0,1)$ defining
\begin{equation*}
 b(\alpha,m)\coloneqq \alpha^{i(1-\nu)}\|(T_m^{\ast}T_m+\alpha I)^{-i}
 (T_m^{\ast}T_m)^{\nu}w\|,
\end{equation*}
it holds that 
\begin{equation*}
    \lim_{\substack{m\rightarrow\infty \\ \alpha\rightarrow 0^{+}}}b(\alpha,m)=0
\end{equation*}
if 
\begin{equation}\label{condplus}
\alpha=o\Bigl(\mu_{m}^{\frac{i-\nu}{i(1-\nu)}}\Bigr),
\end{equation}
when $\nu>0$ and $i>1$ for $\mu_{m}\coloneqq\min(\sigma(T^{\ast}_mT_m)\setminus\lbrace 0\rbrace)$, where $\sigma(T_m^{\ast}T_m)$ denotes the spectrum of $T_m^{\ast}T_m$.
\end{appendixlemma}
\begin{proof}
The case $i=1$ is treated in \cite[Lemma 5.5]{engl1996}. We only consider the case $i>1$.

If $\nu=0$, from the inequality
\begin{equation*}
    \alpha^{i}\|(T_m^{\ast}T_m+\alpha I)^{-i}w\|\leq\alpha\|(T_m^{\ast}T_m+\alpha I)^{-1}w\|
\end{equation*}
the result again follows from \cite[Lemma 5.5]{engl1996}.

Let $\nu>0$. Firstly note that since it holds $\mathcal{X}=\ker(T_m)\oplus\ker(T_m)^{\perp}$ for any $m$, by definition of $b(\alpha,m)$ we have the equality
\begin{equation*}
    \alpha^{i(1-\nu)}\|(T_m^{\ast}T_m+\alpha I)^{-i}
 (T_m^{\ast}T_m)^{\nu}w\|=\alpha^{i(1-\nu)}\|(T_m^{\ast}T_m+\alpha I)^{-i}
 (T_m^{\ast}T_m)^{\nu}w_m\|
\end{equation*}
where $w_m$ is the orthogonal projection of $w$ into $\ker(T_m)^{\perp}\subseteq\ker(T)^{\perp}$, in particular $\|w_m\|\leq\|w\|$. Let now $\lbrace E_{\mu}^{m}\rbrace_{\mu\in\R}$ be a spectral family for $T^{\ast}_mT_m$. We have the equality 
\begin{equation*}
  b^{2}(\alpha,m)=\int_0^{\infty}\frac{\alpha^{2i(1-\nu)}}{(\mu+\alpha)^{2i(1-\nu)}}\frac{\mu^{2\nu}}{(\mu+\alpha)^{2\nu}}dE_{\mu}^{m}\|w_m\|^2.
\end{equation*}
By H\"older's inequality,
\begin{align*}
 b^2(\alpha,m)&\leq\left[\int_0^{\infty}\left(\frac{\alpha}{\mu+\alpha}\right)^{2i}dE_{\mu}^{m}\|w_m\|^2\right]^{1-\nu}\left[\int_0^{\infty}\left(\frac{\mu}{(\mu+\alpha)^{i}}\right)^{2}dE_{\mu}^{m}\|w_m\|^2\right]^{\nu}\\
 &\leq\left[\int_0^{\infty}\left(\frac{\alpha}{\mu+\alpha}\right)^{2i}dE_{\mu}^{m}
 \|w_m\|^2\right]^{1-\nu}\mu_{m}^{2\nu(1-i)}\|w_m\|^{2\nu}\\
 &=\alpha^{2i(1-\nu)}\|(T_m^{\ast}T_m+
 \alpha I)^{-i}w_m\|^{2(1-\nu)}\mu_{m}^{2\nu(1-i)}\|w_m\|^{2\nu}\\
 &\leq\alpha^{2i(1-\nu)}\frac{1}{(\mu_m+\alpha)^{2i(1-\nu)}}\mu_m^{2\nu(1-i)}\|w_m\|^2\\
 &\leq\frac{\alpha^{2i(1-\nu)}}{\mu_m^{2(i-\nu)}}\|w_m\|^2\leq\frac{\alpha^{2i(1-\nu)}}{\mu_m^{2(i-\nu)}}\|w\|^2\rightarrow 0.
\end{align*}
where in the end we used hypothesis \eqref{condplus}.
\end{proof}

Note that equation~\eqref{condplus} is trivially satisfied when $T^{\ast}T$ has finite spectrum, which is the case in the finite dimensional setting.

We now prove \cite[Proposition A.4]{bianchi2023itat} using slightly different hypothesis; see Remark~\ref{finalrmk} and Remark~\ref{secondfinalrmk}.

\begin{appendixproposition}\label{propcorrection}
Let Conditions~\ref{conditions} be satisfied. Moreover, let 
$x^{\dagger}\in\mathcal{X}_{\nu,\rho}$ so that $x^{\dagger}=(T^{\ast}T)^{\nu}w$. For $m$ sufficiently large, there is a unique
$\alpha>0$ that solves
\begin{equation}\label{eqtosolve}
 \alpha^{\frac{2i+1}{2}}\|(T_m^{\ast}T_m+\alpha I)^{-i}(T_m^{\ast}T_m)^{\nu}w\|=
 (i(i+1)h\|x^{\dagger}\|+i\delta)/2.
\end{equation}
Moreover, if $\nu=1$ and $i>1$ assume that $w=Q_ww$ for some $Q_w$ orthogonal projector into a finite dimensional eigenspace of $T^{\ast}T$.

Then, it holds that $\alpha\rightarrow 0$ for $h,\delta\rightarrow 0$, $m\rightarrow\infty$, and, if moreover $\alpha=o(\mu_m^{\frac{i-\nu}{i(1-\nu)}})$ when $\nu<1$ and $i>1$, we have
\begin{equation*}
  \|x^{\dagger}-x_{\alpha,m,i}^{\delta,h}\|=\begin{cases}
      o(1)&\text{if $\nu=0$,}\\
      o((h+\delta)^{\frac{2i\nu}{2i\nu+1}})+O(\gamma_m^{2\nu}\|w\|)&\text{if $0<\nu<1$,}\\
      O((h+\delta)^{\frac{2i}{2i+1}})+O(\gamma_m\|(I-Q_m)Tw\|)&\text{if $\nu=1$,}
  \end{cases}
\end{equation*}
where $\gamma_m\coloneqq\|(I-Q_m)T\|$.
\end{appendixproposition}
\begin{proof}
The existence and uniqueness of a solution $\alpha$ for equation~\eqref{eqtosolve} follows since for $m$ large enough the left-hand side is a nonvanishing monotonically increasing function with image the non-negative real semi-axis. The limit $\alpha\to 0$ follows since the right-hand side is monotonically decreasing as $\delta,h\rightarrow 0$.

From Conditions~\ref{conditions} used in \cite[Lemma A.2]{bianchi2023itat}, the choice of $\alpha$, and the fact that $x^{\dagger}\in\mathcal{X}_{\nu,\rho}$, it follows
\begin{align*}
    \|x^{\dagger}-x_{\alpha,m,i}^{\delta,h}\|
    &\leq\alpha^i\bigl\|(T^{\ast}_mT_m+\alpha I)^{-i}x^{\dagger}\bigr\|
      +\frac{i(i+1)h\|x^{\dagger}\|+i\delta}{2\sqrt{\alpha}}\\
    &=\alpha^i\bigl\|(T^{\ast}_mT_m+\alpha I)^{-i}(T^{\ast}T)^{\nu}w\bigr\|
      +\alpha^{i}\bigl\|(T_m^{\ast}T_m+\alpha I)^{-i}(T_m^{\ast}T_m)^{\nu}w\bigr\|\\
    &\leq 2\alpha^{i}\bigl\|(T_m^{\ast}T_m+\alpha I)^{-i}(T_m^{\ast}T_m)^{\nu}w\bigr\|
      +\bigl\|[(T^{\ast}T)^{\nu}-(T_m^{\ast}T_m)^{\nu}]w\bigr\|,
\end{align*}
where we also used the inequality $\alpha^{i}\|(T_m^{\ast}T_m+\alpha I)^{-i}\|\leq 1$ to obtain the summand $\|[(T^{\ast}T)^{\nu}-(T_m^{\ast}T_m)^{\nu}]w\|$. The estimates for this term follow from \cite[Lemma 2.6]{king1988} for all cases.

If $\nu=0$, then the assertion follows directly from Lemma~\ref{lemlimv1} (using its $\nu=0$ case together with \cite[Lemma 5.5]{engl1996}).

Let now $0<\nu\leq 1$. Set
\begin{equation*}
    b(\alpha,m)\coloneqq \alpha^{i(1-\nu)}
  \bigl\|(T_m^{\ast}T_m+\alpha I)^{-i}(T_m^{\ast}T_m)^{\nu}w\bigr\|
\end{equation*}
as in Lemma~\ref{lemlimv1}, and denote
\begin{equation*}
    R(h,\delta)\coloneqq \frac{i(i+1)h\|x^{\dagger}\|+i\delta}{2}.
\end{equation*}
Then \eqref{eqtosolve} reads
\begin{equation*}
    \alpha^{\frac{2i+1}{2}}
  \bigl\|(T_m^{\ast}T_m+\alpha I)^{-i}(T_m^{\ast}T_m)^{\nu}w\bigr\|
  = R(h,\delta).
\end{equation*}
Using the definition of $b(\alpha,m)$, we obtain
\begin{equation*}
    \alpha^{\frac12+i\nu} b(\alpha,m)=R(h,\delta),
\end{equation*}
and hence
\begin{equation*}
   \alpha^{i}\bigl\|(T_m^{\ast}T_m+\alpha I)^{-i}(T_m^{\ast}T_m)^{\nu}w\bigr\|
  = \alpha^{i\nu}b(\alpha,m)
  = b(\alpha,m)^{\frac{1}{2i\nu+1}}\,
    R(h,\delta)^{\frac{2i\nu}{2i\nu+1}}. 
\end{equation*}
If $0<\nu<1$, by Lemma~\ref{lemlimv1} and the assumption $\alpha=o\bigl(\mu_m^{\frac{i-\nu}{i(1-\nu)}}\bigr)$ for the case $i>1$, we have
$b(\alpha,m)\to 0$ as $h,\delta\to0$ and $m\to\infty$, so
\begin{equation*}
  \,b(\alpha,m)^{\frac{1}{2i\nu+1}}\,
    R(h,\delta)^{\frac{2i\nu}{2i\nu+1}}
  = o\Bigl(R(h,\delta)^{\frac{2i\nu}{2i\nu+1}}\Bigr)
  = o\bigl((h+\delta)^{\tfrac{2i\nu}{2i\nu+1}}\bigr).
\end{equation*}
If $\nu=1$, from the hypothesis $w=Q_ww$ we get the bound
\begin{equation*}
    b(\alpha,m)=\|(T_m^{\ast}T_m+\alpha)^{-i}T_m^{\ast}T_mw\|=\|(T_m^{\ast}T_m+\alpha)^{-i}T_m^{\ast}T_mQ_ww\|\leq\|w\|/\mu_w^{i-1},
\end{equation*}
for $\mu_w\coloneqq\min(\sigma(T_w^{\ast}T_w)\setminus\lbrace 0\rbrace)$ where $T_w\coloneqq TQ_w$. Therefore,
\begin{equation*}
  \,b(\alpha,m)^{\frac{1}{2i\nu+1}}\,
    R(h,\delta)^{\frac{2i\nu}{2i\nu+1}}
  = O\Bigl(R(h,\delta)^{\frac{2i\nu}{2i\nu+1}}\Bigr)
  = O\bigl((h+\delta)^{\tfrac{2i\nu}{2i\nu+1}}\bigr).
\end{equation*}
\end{proof}

In the end, the result of \cite[Theorem A.7]{bianchi2023itat}, from which the convergence rates for both the iAT and the iGKT methods followed. Similarly as before, we state a version with slightly different hypothesis, see Remark~\ref{finalrmk} and Remark~\ref{secondfinalrmk}.

\begin{appendixtheorem}\label{conv2}
Let $C>1$ and $E>\|x^{\dagger}\|$, and let $T_{h,m}\coloneqq Q_mT_h$. 
Assume that Conditions~\ref{conditions} hold and that
\begin{equation*}
    Eh+C\delta\leq\|Q_my^{\delta}\|.
\end{equation*}
Moreover, let $x^{\dagger}\in\mathcal{X}_{\nu,\rho}$ so that $x^{\dagger}=(T^{\ast}T)^{\nu}w$ and, if $\nu=1$ and $i>1$ assume that $w=Q_ww$ for some $Q_w$ orthogonal projector into a finite dimensional eigenspace of $T^{\ast}T$.

Let $\alpha>0$ be the unique 
solution of
\begin{equation}\label{eq:param-hm}
 \alpha^{2i+1}\bigl\|(T_{h,m}T_{h,m}^{\ast}+\alpha I)^{-2i+1}Q_my^{\delta}\bigr\|^{2}=
 (Eh+C\delta)^2.
\end{equation}
Then the same asymptotic estimates of Proposition~\ref{propcorrection} hold for this parameter choice if
\begin{equation}\label{condplus-again}
  \alpha=o\Bigl(\mu_{m}^{\frac{i-\nu}{i(1-\nu)}}\Bigr)
\end{equation}
in the case $0<\nu<1$ and $i>1$ (with $\mu_m$ as in Lemma~\ref{lemlimv1}), .
\end{appendixtheorem}
\begin{proof}
As reported in \cite{bianchi2023itat}, the proof is identical to the proof of \cite[Proposition 3.4]{neubauer1988a} thanks to \cite[Lemma A.6]{bianchi2023itat}. In particular, the convergence rates are a consequence of Proposition~\ref{propcorrection}. The absence of the constant $(2i+1)$ in the lower bound for $E$ follows by applying \cite[Proposition A.8]{bianchi2023itat}, which assures the same convergence rates and thus the non-necessity of this constant.
\end{proof}

\begin{rmk}\label{finalrmk}
We compare the requirement \eqref{condplus}, which we use to show Lemma \ref{lemlimv1}, to the condition 
\begin{equation}\label{xx}
\alpha=o(\mu_m^{\frac{2\nu(i-1)}{1-\nu}})
\end{equation}
used in \cite[Proposition A.4, Theorem A.7]{bianchi2023itat}. Note that for $0<\nu<1$, the chain of inequalities
\begin{align*}
    &\frac{2\nu(i-1)}{1-\nu}>\frac{i-\nu}{i(1-\nu)},\\
    &2\nu i(i-1)>i-\nu,\\
    &\nu(2i^2-2i+1)>i,\\
    &\nu>\frac{i}{i^2+(i-1)^2}
\end{align*}
holds for $i$ large enough, from which it follows that in this case the requirement \eqref{condplus} is weaker than \eqref{xx}. For the case $\nu=0$ we only ask $\alpha=o(1)$ in both cases.
\end{rmk}

\begin{rmk}\label{secondfinalrmk}
Differently from \cite{bianchi2023itat}, to obtain the convergence rates stated in Proposition~\ref{propcorrection} and Theorem~\ref{conv2} for the case $\nu=1$ and $i>1$, we explicitly require the additional assumption $w=Q_w w$, where $Q_w$ is an orthogonal projector onto a finite dimensional eigenspace of $T^{\ast}T$. This assumption, implicitly satisfied in the finite-dimensional case, ensures the claimed stronger convergence rates. In its absence, the convergence rate  reduces to the one provided by \cite[Proposition A.10]{bianchi2023itat}, namely, $O((h+\delta)^{\frac{2}{3}})+O(\gamma_m^2)$.
\end{rmk}


\subsection*{Acknowledgments}
We would like to thank the referees for comments.
D. Bianchi is supported by the Startup Fund of Sun~Yat-sen~University. M. Donatelli is partially supported by MIUR - PRIN 2022 N.2022ANC8HL and GNCS-INdAM Project CUP\_E53C24001950001.

\section*{Declarations}
\subsection*{Conflicts of interest} Not applicable.

\printbibliography

@article{gnnr,
title={Arnoldi decomposition, {GMRES}, and preconditioning for linear discrete ill-posed problems},
author={Gazzola, Silvia and Noschese, Silvia and Novati, Paolo and Reichel, Lothar},
journal={Applied Numerical Methods},
volume={142},
year={2019},
pages={102--121}
}

@article{bianchi2023itat,
  title={Convergence analysis and parameter estimation for the iterated {A}rnoldi-{T}ikhonov method},
  author={Bianchi, Davide and Donatelli, Marco and Furchi, Davide and Reichel, Lothar},
  journal={Numerische Mathematik},
  volume={157},
  year={2025},
  pages={749--779}
}

@article{IRTools,
title={{IR} {T}ools: a {MATLAB} package of iterative regularization methods and 
large-scale test problems},
author={Gazzola, Silvia and Hansen, Per Christian and Nagy, James G.},
journal={Numerical Algorithms},
volume={81},
pages={773--811},
year={2019}
}

@article{bjorck88,
  title={A bidiagonalization algorithm for solving large and sparse ill-posed systems of 
  linear equations}, 
  author={Bj\"orck, {\AA}ke},
  journal={BIT Numerical Mathematics},
  volume={18},
  year={1988},
  pages={659--670}
}

@book{bjorck24,
 title={Numerical Methods for Least Squares Problem},
  author={Bj\"orck, {\AA}ke},
  publisher ={SIAM, Philadelphia},
  year={2024}
}

@book{bjorck14,
title={Numerical Methods in Matrix Computations},
author={Bj\"orck, {\AA}ke},
publisher={Springer, New York},
year={2014}
}

@article{elden77,
title={Algorithms for the regularization of ill-conditioned least squares problems},
author={Eld\'en, Lars},
journal={BIT Numerical Mathematics},
volume={17},
year={1977},
pages={134--145}
}

@article{elden90,
author={Eld\'en, Lars},
title={Algorithms for the computation of functionals defined on the solution of a discrete ill-posed problem},
journal={BIT Numerical Mathematics},
volume={30},
year={1990},
pages={466--483}
}

@article{elden82,
title={A weighted pseudoinverse, generalized singular values and constraint least squares problems},
author={Eld\'en, Lars},
journal={BIT Numerical Mathematics},
volume={2},
year={1982},
pages={487--502}
}

@article{elden05,
author = {Eld\'en, Lars and Hansen, Per Christian and Marielba Rojas},
title = {Minimization of linear functionals defined on solutions of large-scale discrete ill-posed problems},
journal = {BIT Numerical Mathematics},
year = {2005},
pages={329--340}
}

@article{calvetti2003,
title={Tikhonov regularization of large linear problems}, 
author={Calvetti, D. and Reichel, L.},
journal={BIT Numerical Mathematics},
volume={43},
year={2003},
pages={263--283}
}

@article{greengard1997,
title={A new version of the fast multipole method for the {L}aplace equation in three 
dimensions},
author={Greengard, L. and Rokhlin, V.},
journal={Acta Numerica},
volume={6}, 
year={1997},
pages={229--269}
}

@inproceedings{calvetti2000,
title={Restoration of images with spatially variant blur by the {GMRES} method},
author={Calvetti, D. and Lewis, B. and Reichel, L.},
booktitle={Advanced Signal Processing Algorithms, Architectures, and Implementations X, 
ed. F. T. Luk, Proceedings of the Society of Photo-Optical Instrumentation Engineers 
(SPIE)},
volume={4116}, 
year={2000},
pages={364--374}
}

@article{buccini2020gsvd,
title={Generalized singular value decomposition with iterated {T}ikhonov regularization},
author={Buccini, Alessandro and Pasha, Mirjeta and Reichel, Lothar},
journal={Journal of Computational and Applied Mathematics},
volume={373}, 
year={2020},
pages={112276}
}

@article{alqahtani2023error,
title={Error estimates for Golub-Kahan bidiagonalization with {T}ikhonov regularization
for ill-posed operator equations},
  author={Alqahtani, Abdulaziz and Ramlau, Ronny and Reichel, Lothar},
  journal={Inverse Problems},
year={2023},
vol={39},
pages={025002}
}

@article{calvetti2000tikhonov,
  title={Tikhonov regularization and the {L}-curve for large discrete ill-posed problems},
  author={Calvetti, D. and Morigi, S. and Reichel, L. and Sgallari, F.},
  journal={Journal of {C}omputational and {A}pplied {M}athematics},
  volume={123},
  number={1-2},
  pages={423--446},
  year={2000}
}

@article{lewis2009arnoldi,
  title={Arnoldi-{T}ikhonov regularization methods},
  author={Lewis, Bryan and Reichel, Lothar},
  journal={Journal of Computational and Applied Mathematics},
  volume={226},
  pages={92--102},
  year={2009}
}

@article{alkilayh2023arnoldi,
  title={Some numerical aspects of {A}rnoldi-{T}ikhonov regularization},
  author={Alkilayh, Maged and Reichel, Lothar},
  journal={Applied Numerical Mathematics},
  volume={185},
  pages={503--515},
  year={2023}
}

@book{golub2013,
title={Matrix Computations, 4th ed.},
author={Golub, G. H. and Van Loan, C. F.},
publisher={Johns Hopkins University Press, Baltimore},
year={2013}
}

@book{engl1996,
title={Regularization of Inverse Problems},
author={Engl, H. W. and Hanke, M. and Neubauer, A.},
publisher={Kluwer, Dordrecht},
year={1996}
}

@book{natterer2001,
    title      ={The Mathematics of Computerized Tomography},
	author     ={Natterer, F.},
	publisher  ={SIAM, Philadelphia},
	year       ={2001}
}

@book{saad2003,
    title      ={Iterative Methods for Sparse Linear Systems},
	author     ={Saad, Y.},
	edition    ={2nd ed.},
	publisher  ={SIAM, Philadelphia},
	year       ={2003}
}

@book{de1978practical,
	title      ={A {P}ractical {G}uide to {S}plines},
	author     ={de Boor, C.},
    publisher  ={Springer, New York},
	year       ={1978}
}

@book{goodman2016discrete,
title={Discrete {F}ourier and {W}avelet {T}ransforms: {A}n {I}ntroduction 
through {L}inear {A}lgebra with {A}pplications to {S}ignal {P}rocessing},
author={Goodman, R. W.},
publisher={World Scientific Publishing Company, London},
year={2016}
	
}

@article{neubauer1988a,
    title      ={An a posteriori parameter choice for {T}ikhonov regularization in the presence of modeling error},
	author     ={Neubauer, A.},
	journal    ={Applied Numerical Mathematics},
	volume     ={4},
	pages      ={507--519},
    year       ={1988}
}

@article{king1988,
title ={A variant of finite-dimensional {T}ikhonov regularization with a-posteriori parameter choice},
    author     ={King, J. T. and Neubauer, A.},
	journal    ={Computing},
	volume     ={40},
	pages      ={91--109},
    year       ={1988}
}

@article{ramlau2019,
    title      ={Error estimates for {A}rnoldi-{T}ikhonov regularization for ill-posed operator equations},
	author     ={Ramlau, R. and Reichel, L.},
	journal    ={Inverse Problems},
	volume     ={35},
	pages      ={055002},
    year       ={2019}
}

@article{natterer1977,
    title      ={Regularization of ill-posed problems by projection methods},
	author     ={Natterer, F.},
	journal    ={Numererische Mathematik},
	volume     ={28},
    pages      ={329--341},
    year       ={1977}
}

@article{raffetseder2016,
    title      ={Optimal mirror deformation for multi-conjugate adaptive optics systems},
	author     ={Raffetseder, S. and Ramlau, R. and Yudytski, M.},
	journal    ={Inverse Problems},
	volume     ={32},
	pages      ={025009},
    year       ={2016}
}

@article{bentbib2018,
    title      ={Solution methods for linear discrete ill-posed problems for color image restoration},
	author     ={Bentbib, A. H. and El Guide, M. and Jbilou, K. and Onunwor, E. and Reichel, L.},
	journal    ={BIT Numerical Mathematics},
	volume     ={58},
	pages      ={555--578},
    publisher  ={Springer},
    year       ={2018}
}

@article{diazdealba2019,
    title      ={Recovering the electrical conductivity of the soil via a linear integral model},
	author     ={D\'iaz de Alba, P. and Fermo, L. and van der Mee, C. and Rodriguez, G.},
	journal    ={Journal of Computational and Applied Mathematics},
	volume     ={352},
	pages      ={132--145},
    publisher  ={Elsevier},
    year       ={2019}
}

@article{ramlau2012,
    title      ={An efficient solution to the atmospheric turbulence tomography problem using {K}aczmarz iteration},
	author     ={Ramlau, R. and Rosensteiner, M.},
	journal    ={Inverse Problems},
	volume     ={28},
	pages      ={095004},
    publisher  ={IOP publishing},
    year       ={2012}
}

@article{hanke1998nonstationary,
  title={Nonstationary iterated Tikhonov regularization},
  author={Hanke, Martin and Groetsch, Charles W},
  journal={Journal of Optimization Theory and Applications},
  volume={98},
  pages={37--53},
  year={1998},
  publisher={Springer}
}

@article{buccini2017iterated,
  title={Iterated {T}ikhonov regularization with a general penalty term},
  author={Buccini, Alessandro and Donatelli, Marco and Reichel, Lothar},
  journal={Numerical Linear Algebra with Applications},
  volume={24},
  number={4},
  pages={e2089},
  year={2017}
}

@article{donatelli2012nondecreasing,
  title={On nondecreasing sequences of regularization parameters for nonstationary iterated {T}ikhonov},
  author={Donatelli, Marco},
  journal={Numerical Algorithms},
  volume={60},
  pages={651--668},
  year={2012},
  publisher={Springer}
}

@article{donatelli2015arnoldi,
  title={{A}rnoldi methods for image deblurring with anti-reflective boundary conditions},
  author={Donatelli, Marco and Martin, David and Reichel, Lothar},
  journal={Applied Mathematics and Computation},
  volume={253},
  pages={135--150},
  year={2015}
}

@article{gazzola2015krylov,
  title={On {K}rylov projection methods and {T}ikhonov regularization},
  author={Gazzola, Silvia and Novati, Paolo and Russo, Maria Rosaria},
  journal={Electronic Transactions on Numerical Analysis},
  volume={44},
  pages={83--123},
  year={2015}
}

@article{buccini2023arnoldi,
  title={An {A}rnoldi-based preconditioner for iterated {T}ikhonov regularization},
  author={Buccini, Alessandro and Onisk, Lucas and Reichel, Lothar},
  journal={Numerical Algorithms},
  volume={92},
  number={1},
  pages={223--245},
  year={2023},
  publisher={Springer}
}

@article{bianchi2015iterated,
  title={Iterated fractional Tikhonov regularization},
  author={Bianchi, Davide and Buccini, Alessandro and Donatelli, Marco and Serra-Capizzano, Stefano},
  journal={Inverse Problems},
  volume={31},
  number={5},
  pages={055005},
  year={2015},
  publisher={IOP Publishing}
}

@article{ps1982,
  title={LSQR: An Algorithm for sparse linear equations and sparse least squares},
  author={Paige, Christopher C. and Saunders, Michael A.},
  journal={Transactions on Mathematical Software},
  volume={8},
  number={1},
  pages={43--71},
  year={1982},
  publisher={ACM}
}

@article{os1981,
  title={A bidiagonalization-regularization procedure for large scale discretizations of ill-posed problems},
  author={O’Leary, Dianne P. and Simmons, John A.},
  journal={SIAM Journal on Scientific and Statistical Computing},
  volume={2},
  pages={474--489},
  year={1981},
  publisher={SIAM}
}

@article{gn2016,
  title={Inheritance of the discrete Picard condition in Krylov subspace methods},
  author={Gazzola, Silvia and Novati, Paola},
  journal={BIT Numerical Mathematics},
  volume={56},
  pages={893--918},
  year={2016},
  publisher={Springer}
}

@article{n2018,
  title={A convergence result for some Krylov-Tikhonov methods in Hilbert spaces},
  author={Novati, Paola},
  journal={Numerical Functional Analysis and Optimization},
  volume={39},
  pages={655--666},
  year={2018},
  publisher={Taylor & Francis}
}
\end{document}